\theoremstyle{plain}
\newtheorem{theorem}{Theorem}
\newtheorem{corollary}{Corollary}
\newtheorem{proposition}{Proposition}
\theoremstyle{definition}
\theoremstyle{remark}
\newtheorem{remark}{Remark}
\author{Dolgopolik M.V.\footnote{Saint Petersburg State University, Saint Petersburg, Russia}
\footnote{Institute of Problems of Mechanical Engineering, Saint Petersburg, Russia}}
\title{New Direct Numerical Methods for Some Multidimensional Problems of the Calculus of Variations}
\begin{document}

\maketitle

\begin{abstract}
In this paper we develop a new approach to the design of direct numerical methods for multidimensional problems of 
the calculus of variations. The approach is based on a transformation of the problem with the use of a new class of
Sobolev-like spaces that is studied in the article. This transformation allows one to analytically compute 
the direction of steepest descent of the main functional of the calculus of variations with respect to a certain inner
product, and, in turn, to construct new direct numerical methods for multidimensional problems of the calculus of
variations. In the end of the paper we point out how the approach developed in the article can be extended to the case
of problems with more general boundary conditions, problems for functionals depending on higher order derivatives, and
problems with isoperimetric and/or pointwise constraints.
\end{abstract}

\section{Introduction}

The main problem of the calculus of variations has the form
\begin{equation} \label{Intro_MainProb}
  \min \: \mathcal{I}(u) = \int_{\Omega} f(x, u(x), \nabla u(x)) \, dx \quad \text{subject to}
  \quad u|_{\partial \Omega} = \psi,
\end{equation}
where $\Omega \subset \mathbb{R}^n$ is an open set, $\partial \Omega$ is the boundary of $\Omega$, and
$f \colon \Omega \times \mathbb{R} \times \mathbb{R}^n \to \mathbb{R}$ and 
$\psi \colon \partial \Omega \to \mathbb{R}$ are given functions. Various aspects of this problem, such as the existence
and regularity of solutions \cite{Morrey,Giaquinta,Giusti,DarkSide,Dacorogna}, qualitative properties of critical
points (so-called ``the calculus of variations in the large'') \cite{Morse,Ekeland}, and necessary and sufficient
conditions for a local minimum \cite{GiaqHild}, have been extensively studied by many researches. However, 
relatively little attention has been paid to the development of direct numerical methods (especially in the
multidimensional case) for problems of the calculus of variations. 

By direct numerical methods we mean methods that are not based on direct numerical solution of the Euler--Lagrange
equation, and, instead, rely on the variational formulation of the problem. Besides being one of the approaches to 
numerical solution of some partial differential equations, direct numerical methods are especially useful and important
for those problems of the calculus of variations that arise directly as optimization problems. This type of problems
naturally appear, in particular, in image processing \cite{ImSegm,ImageProc}.

The vast majority of direct numerical methods of the calculus of variations is based on an approximate reduction of 
problem \eqref{Intro_MainProb} to a finite dimensional optimization problem. Various types of reduction techniques
and corresponding numerical methods in the one dimensional case (i.e. in the case when $\Omega = (a, b) \subset
\mathbb{R}$) were proposed in
\cite{Chernousko,Gregory,Gregory1,Gregory2,MaWalk,ZhouChalabi1,ZhouChalabi2,Elnagar,Levin,Agrawal,NeuralNet,Amini}. In
the multidimensional case, the range of choice of direct methods is much more narrow, and it
includes (although is not exhausted by) the finite elements methods, the Galerkin method and the Ritz method
\cite{Mikhlin,Reddy,Komzsik,Cassel,Carstensen,Ortner,NgSancho,Banichuk}. However, there exist some direct numerical
methods of the calculus of variations in the one dimensional case that do not consist of an approximate reduction to a
finite dimensional problem. Among them are the first- and second-variation methods \cite{Miele}, that are based on
straightforward usage of the necessary optimality conditions for problem \eqref{Intro_MainProb}, He's variational
iteration method \cite{Yousefi}, the continuous method of steepest descent \cite{Rosenbloom}, the discrete steepest
descent method \cite{Stein,Polyak}, Newton's method \cite{NewtonMethod}, and the method of hypodifferential descent
based on the use of exact penalty functions \cite{DemTam,Tamsyan1,DemTam2}. Let us also mention that some
multidimensional problems of the calculus of variations can be solved by standard gradient based methods for
functionals defined on Hilbert or Banach spaces \cite{Rosenbloom,Polyak,Daniel,Vainberg1,DemRub70,
Vainberg2,KantAkilov,Penot} with the use of the so-called Sobolev gradients \cite{Neuberger}.

The main goal of this paper is to develop a new approach to the design of direct numerical methods for
multidimensional problems of the calculus of variations. This approach is based on a transformation of problem
\eqref{Intro_MainProb} that allows one to analytically compute the direction of steepest descent of the functional
$\mathcal{I}$ with respect to a certain norm. Utilizing this direction of steepest descent one can apply an
obvious modification of almost any first order method of finite dimensional optimization to problem
\eqref{Intro_MainProb}. The basic ideas (in a very crude form) that lead to the the development of the approach studied
in this article were presented in the two dimensional case in \cite{DolgTam}. It should be noted that the main results
of the paper \cite{DolgTam} were inspired by the ideas of the late prof. V.F.~Demyanov \cite{VFD94,Demyanov}, as well as
the method of hypodifferential descent and the method of steepest descent mentioned above.

The paper is organised as follows. In Section~\ref{Section_InformalArg} we informally discuss the underlying ideas of
the approach developed in this article. In Section~\ref{Section_FuncSpaces} we introduce and study a new class of
Sobolev-like spaces that plays a central role in the formalization of the new direct numerical methods. In particular,
we obtain a convenient characterization of a certain function space from this class that is very important for 
the transformation of problem \eqref{Intro_MainProb}. The direction of steepest descent of the functional
$\mathcal{I}$ with respect to a certain norm is derived in Section~\ref{Section_NewMinMethods}. In the Conclusion we
discuss possible generalizations of the ideas developed in this paper, and briefly outline some directions of future
research.

\section{How to Compute the Direction of Steepest Descent?}
\label{Section_InformalArg}

In this section, we informally discuss a general technique for constructing new minimization methods for the main
problem of the calculus of variations. A possible formalization of this technique is presented in the subsequent
sections. We suppose that all functions that appear in this section are sufficiently smooth.

Consider the main problem of the calculus of variations
\begin{equation} \label{EqMainProbTDim}
  \min \: \mathcal{I}(u) = \int_{\Omega} f(x, u(x), \nabla u(x)) \, dx \quad \text{subject to}
  \quad u|_{\partial \Omega} = \psi,
\end{equation}
where $\Omega \subset \mathbb{R}^n$ is an open set and $f = f(x, u, z)$. We want to apply infinite dimensional
analogues of standard gradient-based methods of finite dimensional optimization to the problem above. In order to do
that we need to compute the gradient or, more generally, the direction of steepest descent of the functional
$\mathcal{I}$.

It is well-known and easy to check that the functional $\mathcal{I}$ is G\^ateaux differentiable, and its G\^ateaux
derivative has the form
\begin{equation} \label{GatDerIP}
  \mathcal{I}'[u](h) = \int_{\Omega} \left( \frac{\partial f}{\partial u}(x, u(x), \nabla u(x)) h(x) +
  \sum_{i = 1}^n \frac{\partial f}{\partial z_i}(x, u(x), \nabla u(x)) \frac{\partial h}{\partial x_i}(x) 
  \right) \, dx.
\end{equation}
Therefore in order to compute the direction of steepest descent of the functional $\mathcal{I}$ we need to solve
the following problem of the calculus of variations
$$
  \min \: \mathcal{I}'[u](h) \quad \text{subject to} \quad 
  h|_{\partial \Omega} = 0, \quad \| h \| \le 1,
$$
where $\| \cdot \|$ is some norm. However, this problem is, usually, too complicated to be solved analytically, and
even in simple cases it is equivalent to the problem of solving a linear partial differential equation (see, e.g.,
\cite{Neuberger}, Chapter~9).

In order to overcome this difficulty, let us transform problem \eqref{EqMainProbTDim}. For the sake of simplicity,
suppose that $n = 2$, $\psi \equiv 0$, and let $\Omega$ be an open box, i.e. $\Omega = (a_1, b_1) \times (a_2, b_2)$. 
Let also $u$ be a function such that $u|_{\partial \Omega} = 0$. Observe that
\begin{equation} \label{NewtLebForm2dim}
  u(x_1, x_2) = \int_{a_1}^{x_1} \frac{\partial u}{\partial x_1}(\xi_1, x_2) \, d \xi_1 \quad \forall x \in \Omega,
\end{equation}
and $\partial u / \partial x_1(x_1, a_2) \equiv 0$ due to the fact that $u(x_1, a_2) \equiv 0$. Therefore
$$
  \frac{\partial u}{\partial x_1}(x_1, x_2) = 
  \int_{a_2}^{x_2} \frac{\partial^2 u}{\partial x_2 \partial x_1}(x_1, \xi_2) \, d \xi_2 \quad \forall x \in \Omega.
$$
Hence with the use of \eqref{NewtLebForm2dim} one gets that
$$
  u(x_1, x_2) = \int_{a_1}^{x_1} \int_{a_2}^{x_2} 
  \frac{\partial^2 u}{\partial x_2 \partial x_1}(\xi_1, \xi_2) \, d \xi_2 d \xi_1 \quad \forall x \in \Omega.
$$
Furthermore, from the latter equality it follows that
$$
  \int_{a_1}^{b_1} \frac{\partial^2 u}{\partial x_2 \partial x_1}(\xi_1, \cdot) \, d \xi_1 \equiv 0, \quad
  \int_{a_2}^{b_2} \frac{\partial^2 u}{\partial x_2 \partial x_1}(\cdot, \xi_2) \, d \xi_2 \equiv 0
$$
due to the fact that $u(x_1, b_2) \equiv 0$ and $u(b_1, x_2) \equiv 0$.

Let now $v$ be a function such that
$$
  \int_{a_1}^{b_1} v(\xi_1, \cdot) \, d \xi_1 \equiv 0, \quad
  \int_{a_2}^{b_2} v(\cdot, \xi_2) \, d \xi_2 \equiv 0.
$$
Then it is easy to see that for the function $u = T v$, where
$$
  (T v)(x_1, x_2) = 
  \int_{a_1}^{x_1} \int_{a_2}^{x_2} v(\xi_1, \xi_2) \, d \xi_2 d \xi_1 \quad \forall x \in \Omega,
$$
one has $u|_{\partial \Omega} = 0$. Thus, we have that the following result holds true.

\begin{proposition} \label{PrpChrctFunc}
Let $u \colon [a_1, b_1] \times [a_2, b_2] \to \mathbb{R}$ be a sufficiently smooth function. Then 
$u|_{\partial \Omega} = 0$ if and only if there exists a sufficiently smooth function $v$ such that
\begin{enumerate}
\item{$u(x_1, x_2) = (T v)(x_1, x_2) = \int_{a_1}^{x_1} \int_{a_2}^{x_2} v(\xi_1, \xi_2) \, d \xi_2 d \xi_1$ for all 
$x \in \Omega$;}

\item{$\int_{a_1}^{b_1} v(\xi_1, \cdot) \, d \xi_1 \equiv 0$ and 
$\int_{a_2}^{b_2} v(\cdot, \xi_2) \, d \xi_2 \equiv 0$.
}
\end{enumerate}
Moreover, $v = \partial^2 u / \partial x_1 \partial x_2$.
\end{proposition}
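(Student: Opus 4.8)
The plan is to prove the two implications separately and then verify the final identity $v = \partial^2 u/\partial x_1 \partial x_2$ by direct differentiation, which will also yield uniqueness of $v$.

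For the necessity direction ($u|_{\partial\Omega}=0 \Rightarrow$ existence of $v$) I would fix $v = \partial^2 u/\partial x_1 \partial x_2$ from the outset and show that this choice satisfies both conditions. Condition~1 is obtained exactly as in the informal argument preceding the proposition: since $u(a_1,\cdot)\equiv 0$, the Newton--Leibniz formula gives $u(x_1,x_2)=\int_{a_1}^{x_1}\partial u/\partial x_1(\xi_1,x_2)\,d\xi_1$; since $u(\cdot,a_2)\equiv 0$ one has $\partial u/\partial x_1(x_1,a_2)\equiv 0$, whence $\partial u/\partial x_1(x_1,x_2)=\int_{a_2}^{x_2}\partial^2 u/\partial x_2\partial x_1(x_1,\xi_2)\,d\xi_2$; substituting one integral into the other yields $u=Tv$. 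For condition~2 I would use the two remaining sides of the boundary: setting $x_1=b_1$ in $u=Tv$ and invoking $u(b_1,\cdot)\equiv 0$ gives $\int_{a_2}^{x_2}\bigl(\int_{a_1}^{b_1}v(\xi_1,\xi_2)\,d\xi_1\bigr)\,d\xi_2\equiv 0$ for all $x_2$; differentiating in $x_2$ (legitimate for smooth $v$) produces $\int_{a_1}^{b_1}v(\xi_1,\cdot)\,d\xi_1\equiv 0$, and the symmetric argument with $x_2=b_2$ gives the second integral identity. The passage from $\partial^2 u/\partial x_2\partial x_1$ to $\partial^2 u/\partial x_1\partial x_2$ is justified by Schwarz's theorem under the smoothness hypothesis.

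For the sufficiency direction ($v$ with 1--2 $\Rightarrow u|_{\partial\Omega}=0$) I would evaluate $u=Tv$ on each of the four edges of $\partial\Omega$. On $\{x_1=a_1\}$ and on $\{x_2=a_2\}$ the corresponding outer or inner integral is taken over a degenerate interval, so $u$ vanishes there without any use of condition~2. On $\{x_1=b_1\}$ I would interchange the order of integration (Fubini, valid for continuous $v$) to write $u(b_1,x_2)=\int_{a_2}^{x_2}\bigl(\int_{a_1}^{b_1}v(\xi_1,\xi_2)\,d\xi_1\bigr)\,d\xi_2$, which is zero by the first identity of condition~2; the edge $\{x_2=b_2\}$ is handled in the same way using the second identity. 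Hence $u$ vanishes on all of $\partial\Omega$.

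Finally, the identity $v=\partial^2 u/\partial x_1\partial x_2$ follows by applying $\partial/\partial x_2$ and then $\partial/\partial x_1$ to $u=Tv$: differentiating the double integral with respect to its two upper limits recovers the integrand by the fundamental theorem of calculus, which simultaneously shows that $v$ is uniquely determined by $u$. I do not anticipate any genuine obstacle: the proof is essentially careful bookkeeping over the four boundary edges combined with two applications of the Newton--Leibniz formula. The only points requiring mild care are the passage from an integral identity holding for all $x_2$ to the vanishing of the integrand, which is justified by differentiation under the smoothness assumption, and the implicit use of Schwarz's theorem to interchange the order of the mixed second-order derivatives.
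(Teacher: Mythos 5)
Your proposal is correct and follows essentially the same route as the paper: the paper establishes this proposition via the informal derivation immediately preceding it, applying the Newton--Leibniz formula twice with $v = \partial^2 u/\partial x_2\,\partial x_1$ and reading off the two integral identities from the vanishing of $u$ on the edges $\{x_1 = b_1\}$ and $\{x_2 = b_2\}$, with the converse checked directly from $u = Tv$. Your write-up merely makes explicit the steps the paper leaves as ``easy to see'' (differentiating the integral identities to extract the vanishing integrands, the edge-by-edge verification of sufficiency, and the appeal to Schwarz's theorem), all of which are sound under the stated smoothness hypothesis.
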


From the proposition above it follows that problem \eqref{EqMainProbTDim} with $\psi \equiv 0$ is equivalent to the
following optimization problem: minimize
\begin{equation} \label{ProblemRefom}
  F(v) = \int_{\Omega} f\Big(x, (T v)(x), 
  \int_{a_2}^{x_2} v(x_1, \xi_2) \, d \xi_2,  \int_{a_1}^{x_1} v(\xi_1, x_2) \, d \xi_1 \Big) \, d x_1 d x_2
\end{equation}
subject to the constraints
\begin{equation} \label{Constraints}
  \int_{a_1}^{b_1} v(\xi_1, \cdot) \, d \xi_1 \equiv 0, \quad
  \int_{a_2}^{b_2} v(\cdot, \xi_2) \, d \xi_2 \equiv 0.
\end{equation}
As we shall see, one can easily compute the direction of steepest descent for this problem.

Indeed, denote by $L_0$ the linear space consisting of all functions $v$ satisfying \eqref{Constraints}. Clearly, the
functional $F$ is G\^ateaux differentiable. Integrating by parts in \eqref{GatDerIP} one gets that the G\^ateaux
derivative of $F$ has the form
$$
  F'[v](h) = \int_{\Omega} Q(v)(x) h(x) \, dx,
$$
where
\begin{multline} \label{QofV}
  Q(v)(x) = 
  \int_{x_1}^{b_1} \int_{x_2}^{b_2} \frac{\partial f}{\partial u}(\xi, u(\xi), \nabla u(\xi)) \, d \xi_2 d \xi_1 - \\
  - \int_{x_2}^{b_2} \frac{\partial f}{\partial z_1}(x_1, \xi_2, u(x_1, \xi_2), \nabla u(x_1, \xi_2)) \, d \xi_2 - \\
  - \int_{x_1}^{b_1} \frac{\partial f}{\partial z_2}(\xi_1, x_2, u(\xi_1, x_2), \nabla u(\xi_1, x_2)) \, d \xi_1,
\end{multline}
and $u = T v$. Hence the direction of steepest descent for problem \eqref{ProblemRefom}, \eqref{Constraints} is a
solution of the following optimization problem:
\begin{equation} \label{ProbDSD_Mod}
  \min \: \int_{\Omega} Q(v)(x) h(x) \, dx \quad \text{subject to} 
  \quad h \in L_0, \quad \| h \| \le 1,
\end{equation}
where $\| \cdot \|$ is some norm. We choose the $L_2$-norm, i.e. 
$\| h \| = \Big( \int_{\Omega} h^2(x) \, dx \Big)^{\frac12}$.

Let us solve problem \eqref{ProbDSD_Mod}. 

\begin{proposition} \label{Prp_SDD_Informal}
Suppose that the function $u = T v$ does not satisfy the Euler--Lagrange equation for the functional $\mathcal{I}$.
Then the direction of steepest descent $h^*$ for problem \eqref{ProblemRefom}, \eqref{Constraints} has the form 
$h^*(x) = G(v)(x) / \| G(v) \|_2$, where
\begin{multline} \label{DSD_informal}
  G(v)(x) = - Q(v)(x) + \frac{1}{b_1 - a_1} \int_{a_1}^{b_1} Q(v)(\xi_1, x_2) d \xi_1 +  \\
  + \frac{1}{b_2 - a_2} \int_{a_2}^{b_2} Q(v)(x_1, \xi_2) d \xi_2 -
  \frac{1}{(b_1 - a_1)(b_2 - a_2)} \int_{a_1}^{b_1} \int_{a_2}^{b_2} Q(v)(\xi_1, \xi_2) d \xi_2 d \xi_1.
\end{multline}
\end{proposition}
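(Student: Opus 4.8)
The plan is to read problem \eqref{ProbDSD_Mod} as the minimization of the linear functional $h \mapsto \int_\Omega Q(v)(x) h(x)\, dx$ over the intersection of the closed subspace $L_0 \subset L_2(\Omega)$ with the unit ball of the $L_2$-norm, and to solve it by an orthogonal-decomposition argument followed by the Cauchy--Schwarz inequality. Concretely, I would show that $G(v)$ as given in \eqref{DSD_informal} is precisely $-P_{L_0} Q(v)$, the negative of the $L_2$-orthogonal projection of $Q(v)$ onto $L_0$; the minimizer then falls out immediately. To establish this it suffices to check the two defining properties of an orthogonal decomposition, namely that $-G(v) = P_{L_0}Q(v) \in L_0$ and that the remainder $Q(v) - P_{L_0}Q(v) = Q(v) + G(v)$ is orthogonal (in $L_2(\Omega)$) to every element of $L_0$.

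First I would verify that $G(v) \in L_0$, i.e. that $G(v)$ satisfies both constraints in \eqref{Constraints}. This is a direct computation: integrating the right-hand side of \eqref{DSD_informal} in $\xi_1$ over $(a_1,b_1)$ (and, symmetrically, in $\xi_2$ over $(a_2,b_2)$), the $-Q(v)$ term cancels against the corresponding single-integral averaging term, while the other single-integral term cancels against the double-integral term, all by Fubini's theorem; the net result is zero. Next I would show $Q(v) + G(v) \in L_0^\perp$. By \eqref{DSD_informal}, $Q(v) + G(v)$ equals the last three terms on the right-hand side of \eqref{DSD_informal}, which is a function of $x_1$ alone plus a function of $x_2$ alone (the double-integral term being a constant). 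For any $h \in L_0$, integrating such a separable function against $h$ over $\Omega$ and applying Fubini reduces each term to an integral of $h$ in a single variable over the full interval, and every one of these vanishes by \eqref{Constraints}. Hence $\int_\Omega (Q(v) + G(v))(x) h(x)\, dx = 0$ for all $h \in L_0$.

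With the decomposition in hand, for every feasible $h$ (that is, $h \in L_0$ with $\|h\|_2 \le 1$) I would write, using the previous step and $G(v) \in L_0$,
$$
  \int_\Omega Q(v)(x) h(x)\, dx = - \int_\Omega G(v)(x) h(x)\, dx \ge - \|G(v)\|_2 \, \|h\|_2 \ge - \|G(v)\|_2,
$$
where the first inequality is Cauchy--Schwarz. Equality holds throughout exactly when $h = G(v)/\|G(v)\|_2$, so this function is the unique minimizer and the minimal value is $-\|G(v)\|_2$. Note that the candidate minimizer indeed lies in $L_0$, since $G(v) \in L_0$ and $L_0$ is a linear subspace.

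The main obstacle, and the only place where the hypothesis is used, is ruling out degeneracy: I must show $G(v) \not\equiv 0$ whenever $u = T v$ fails the Euler--Lagrange equation. I would argue by contraposition. From \eqref{DSD_informal}, $G(v) \equiv 0$ forces $Q(v)$ to coincide with the last three terms there, hence to have the separable form $p(x_2) + q(x_1)$, and therefore $\partial^2 Q(v)/\partial x_1 \partial x_2 \equiv 0$ on the box $\Omega$. The delicate computation is then to differentiate \eqref{QofV} twice, once in $x_1$ and once in $x_2$, tracking the contributions from the variable limits of integration; here the signs and the boundary terms must be handled with care, and this is the most error-prone step of the argument. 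Carrying it out shows that $\partial^2 Q(v)/\partial x_1 \partial x_2$ equals, up to sign, the Euler--Lagrange expression $\partial f/\partial u - (d/dx_1)(\partial f/\partial z_1) - (d/dx_2)(\partial f/\partial z_2)$ evaluated at $u = T v$. Thus $G(v) \equiv 0$ implies that $u$ satisfies the Euler--Lagrange equation, and the assumed failure of that equation guarantees $G(v) \not\equiv 0$, so that $h^* = G(v)/\|G(v)\|_2$ is well defined.
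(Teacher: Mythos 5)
Your proof is correct, but it takes a genuinely different route from the paper's own proof of this proposition. The paper argues via necessary conditions: it applies the Lagrange multiplier rule to \eqref{ProbDSD_Mod} to obtain $\int_{\Omega}(Q(v)(x)+\lambda h^*(x))h(x)\,dx=0$ for all $h\in L_0$, tests against $h=\partial^2\varphi/\partial x_1\partial x_2$ with $\varphi\in C_0^{\infty}(\Omega)$, invokes the fundamental lemma of the calculus of variations to conclude $\partial^2(Q(v)+\lambda h^*)/\partial x_1\partial x_2=0$, deduces the ansatz $h^*=-\lambda^{-1}Q(v)+r_1(x_1)+r_2(x_2)$, and finally determines $r_1,r_2$ by solving the two integral equations expressing $h^*\in L_0$; the Euler--Lagrange hypothesis enters exactly where you placed it, namely in ruling out $\lambda=0$. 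You instead guess-and-verify: you recognize \eqref{DSD_informal} as $G(v)=-Pr_{L_0}Q(v)$, check the two defining properties of the orthogonal decomposition (both of your computations are correct: $S_iG(v)=0$ by the pairwise cancellations you describe, and $Q(v)+G(v)$ is a sum of a function of $x_1$ alone and a function of $x_2$ alone, hence annihilates $L_0$), and then minimize the linear functional over the unit ball of $L_0$ by Cauchy--Schwarz, which also gives uniqueness of the minimizer for free. Your identification is precisely formula \eqref{ProjExplForm} of Proposition~\ref{Prp_AnalyticalProjection} specialized to $n=2$, so your argument is in substance the one the paper itself adopts in its rigorous treatment in Section~\ref{Section_NewMinMethods}, where the steepest descent direction is expressed through $Pr_{L_0}Q(u)$; what your route buys is full rigor in $L_2(\Omega)$ without any regularity assumption on $h^*$ (the paper's multiplier--fundamental-lemma argument tacitly assumes such regularity, which is tolerable since Section~\ref{Section_InformalArg} is explicitly informal), while the paper's route buys a derivation of the formula rather than a verification of it. One caveat confirming your instinct that the differentiation of \eqref{QofV} is the error-prone step: with \eqref{QofV} exactly as printed, the mixed derivative comes out as $f_u+\frac{d}{dx_1}f_{z_1}+\frac{d}{dx_2}f_{z_2}$, whereas the Fubini computation of $F'[v]$ from \eqref{GatDerIP} actually yields $+$ signs on the two single-integral terms of $Q(v)$, with which $\partial^2 Q(v)/\partial x_1\partial x_2$ is exactly the Euler--Lagrange expression, as both you and the paper assert; this sign slip is upstream of the proposition and affects neither your proof nor the paper's, since both treat $Q(v)$ simply as the kernel representing $F'[v]$ in problem \eqref{ProbDSD_Mod}.
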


\begin{proof}
Applying the Lagrange multipliers rule to problem \eqref{ProbDSD_Mod} one gets that there exists 
$\lambda \in \mathbb{R}$ such that
$$
  \int_{\Omega} (Q(v)(x) + \lambda h^*(x)) h(x) \, dx = 0 \quad \forall h \in L_0.
$$
Note that for any infinitely differentiable function $\varphi$ with compact support one has 
$\partial^2 \varphi / \partial x_1 \partial x_2 \in L_0$ (see Proposition~\ref{PrpChrctFunc}). Therefore
$$
  \int_{\Omega} (Q(v)(x) + \lambda h^*(x)) \frac{\partial^2 \varphi}{\partial x_1 \partial x_2}(x) \, dx =
  \int_{\Omega} \frac{\partial^2 }{\partial x_1 \partial x_2} (Q(v)(x) + \lambda h^*(x)) \varphi(x) \, dx = 0
$$
for any infinitely differentiable function $\varphi$ with compact support. Hence applying the fundamental lemma of
the calculus of variations one gets that
$$
  \frac{\partial^2 }{\partial x_1 \partial x_2} (Q(v)(x) + \lambda h^*(x)) = 0 \quad \forall x \in \Omega.
$$
It is easy to verify that $\lambda = 0$ if and only if the function $u = T v$ satisfies the Euler--Lagrange equation
for the functional $\mathcal{I}$ (see~\eqref{QofV}). Thus, we can suppose that $\lambda \ne 0$. Hence
$$
  h^*(x) = - \frac{1}{\lambda} Q(v)(x) + r_1(x_1) + r_2(x_2),
$$
where $r_1$ and $r_2$ are some functions. Taking into account the fact that $h^* \in L_0$ one obtains that
$$
  \begin{cases}
  - \frac{1}{\lambda} \int_{a_1}^{b_1} Q(v)(\xi_1, \cdot) d \xi_1 + \int_{a_1}^{b_1} r_1(\xi_1) d \xi_1 +
  (b_1 - a_1) r_2(\cdot) = 0, \\
  - \frac{1}{\lambda} \int_{a_2}^{b_2} Q(v)(\cdot, \xi_2) d \xi_2 + (b_2 - a_2) r_1(x_1) +
  \int_{a_2}^{b_2} r_2(\xi_2) d \xi_2 = 0
  \end{cases}
$$
Solving this system with respect to $r_1$ and $r_2$ one obtains that
\begin{gather*}
  r_1(x_1) = \frac{1}{\lambda(b_2 - a_2)} \int_{a_2}^{b_2} Q(v)(x_1, \xi_2) d \xi_2, \\
  r_2(x_2) = \frac{1}{\lambda(b_1 - a_1)} \int_{a_1}^{b_1} Q(v)(\xi_1, x_2) d \xi_1 -
  \frac{1}{\lambda (b_1 - a_1)(b_2 - a_2)} \int_{\Omega} Q(v)(\xi) d \xi.
\end{gather*}
Hence \eqref{DSD_informal} holds true.
\end{proof}

Since we know the direction of steepest descent for problem \eqref{ProblemRefom}, \eqref{Constraints}, we can apply
an obvious modification of almost any gradient-based algorithm of finite dimensional optimization to this problem and,
in turn, to the initial problem \eqref{EqMainProbTDim}. 

If we look at the way the direction of steepest descent was derived, we can easily see that this direction is the
direction of the steepest descent of the functional $\mathcal{I}$ with respect to the norm
$$
  \| u \| =  
  \left( \int_{\Omega} \left( \frac{\partial^2 u}{\partial x_1 \partial x_2}(x) \right)^2 dx \right)^{\frac12}
$$
(the fact that this seminorm is, indeed, a norm follows from Proposition~\ref{PrpChrctFunc}). However, any space of
smooth functions equipped with this norm is incomplete. Therefore it is natural to consider the original problem in the
setting of Sobolev-like spaces, and to transfer the main ideas discussed above to this more general setting.

\begin{remark}
As it is well-known, the direction of steepest descent as well as the performance of the method of steepest descent
depend on the choice of an underlying Hilbert (Banach) space and an inner product (norm) in this space 
(see, e.g., \cite{Nashed65,Nashed71,Neuberger}). From this point of view, the main goal of this article is to introduce
a Hilbert space such that the direction of the steepest descent of the functional $\mathcal{I}(u)$ in this space can be
easily computed analytically.
\end{remark}

\section{Special Function Spaces}
\label{Section_FuncSpaces}

In this section, we introduce a class of function spaces that plays a central role in the formalization of 
the minimization methods for multidimensional problems of the calculus of variations discussed above. This class of
functions is closely related to the Sobolev spaces and possesses many properties of these spaces. We suppose that 
the reader is familiar with basic results on the Sobolev spaces that can be found in \cite{Adams, Evans, Leoni}.

\subsection{Main Definitions and Basic Properties}

Introduce the notation first. A point in $\mathbb{R}^n$ is denoted by $x = (x_1, \ldots, x_n) \in \mathbb{R}^n$, and its
norm is denoted by $|x| = (\sum_{i = 1}^n x_i^2)^{\frac{1}{2}}$. As usual, any $n$-tuple  $\alpha = (\alpha_1, \ldots,
\alpha_n) \in Z^n_+$ of nonnegative integers $\alpha_i$ is called a multi-index; its absolute value 
$|\alpha| = \alpha_1 + \ldots + \alpha_n$. For any multi-index $\alpha$ denote by 
$D^{\alpha} = D_1^{\alpha_1} \dots D_n^{\alpha_n}$ a differential operator of order $|\alpha|$, where 
$D_i = \partial / \partial x_i$  for $i \in \{1, \ldots, n\}$. If $\alpha = (0, \ldots, 0)$, then $D^{\alpha} u = u$ for
any function $u$. Define
$$
  I_k = \big\{ \alpha \in Z^n_+ \mid 
  |\alpha| = k, \alpha_i = 0 \text{ or } \alpha_i = 1 \; \forall i \in \{ 1, \ldots, n \} \big\}.
$$
for any $k \in \{ 0, \ldots, n \}$. It is clear that $I_0 = \{ (0, \ldots, 0) \}$ and $I_n = \{ (1, \ldots, 1) \}$. If
$\alpha \in I_k$ for some $0 \le k \le n$, then a unique multi-index $\beta \in I_{n - k}$ such that 
$\alpha + \beta \in I_n$ is denoted by $\overline{\alpha}$. Here the sum of multi-indices is component-wise. The kernel
of a linear operator $L \colon X \to Y$ is referred to as $\ker L$ (here $X$, $Y$ are linear spaces).

\begin{remark}
We consider only real valued functions and normed spaces over the field of real numbers. If $f$ is a bounded linear
functional defined on a normed space $X$, then we denote its norm by $\| f \|$ or by $\| f \|_X$ when we want to
specify the domain of $f$.
\end{remark}

Hereafter, let $\Omega \subset \mathbb{R}^n$ be a bounded open box, i.e. $\Omega = \prod_{k = 1}^n (a_i, b_i)$. Denote
by $C^k(\overline{\Omega})$ the set of all those $u \in C^k(\Omega)$ for which all functions $D^{\alpha} u$ with 
$0 \le |\alpha| \le k$ are bounded and uniformly continuous on $\Omega$ (then there exist unique continuous extensions
of functions $D^{\alpha} u$ with $0 \le |\alpha| \le k$ to the closure $\overline{\Omega}$ of the set $\Omega$).  The
set of all infinitely continuously differentiable functions $u \colon \Omega \to \mathbb{R}$ with compact support is
denoted by $C_0^{\infty}(\Omega)$.

Let us introduce a new function space. For any $m \in \{1, \ldots, n\}$ and  $1 \le p \le \infty$ denote by 
$M^{m, p}(\Omega)$ the set of all $u \in L_p(\Omega)$ such that for any  $k \in \{1, \ldots, m\}$ and $\alpha \in I_k$
there exists the weak derivative $D^{\alpha} u$ belonging to $L_p(\Omega)$. Thus, $M^{m, p}(\Omega)$ consists of all
functions $u \in L_p(\Omega)$ for which there exist all weak mixed derivatives of the order $k = 1 \colon m$ that belong
to $L_p(\Omega)$.  The set $M^{m, p}(\Omega)$ is a linear space that can be equipped with the norm
$$
  \| u ; M^{m, p} \| = 
  \begin{cases}
    \left( \sum_{k = 0}^m \sum_{\alpha \in I_k} (\| D^{\alpha} u \|_p)^p \right)^{\frac 1p} 
    \text{ for } 1 \le p < \infty, \\ 
    \max\{\| D^{\alpha} u \|_{\infty} \mid \alpha \in I_k, 1 \le k \le m \} \text{ for } p = \infty,
  \end{cases}
$$
where $\| \cdot \|_p$ is the standard norm on $L_p(\Omega)$. The closure of $C^{\infty}_0(\Omega)$ in the normed space
$M^{m, p}(\Omega)$ is denoted by $M^{m, p}_0(\Omega)$.

Let, as usual, $W^{m, p}(\Omega)$ with $m \in \mathbb{N}$ and $1 \le p \le \infty$ be the Sobolev space, and 
$W_0^{m, p}(\Omega)$ be the closure of $C_0^{\infty}(\Omega)$ in $W^{m, p}(\Omega)$. It is clear that 
$W^{m, p}(\Omega) \subset M^{m, p}(\Omega)$ and, analogously, $W^{m, p}_0(\Omega) \subset M^{m, p}_0(\Omega)$. Moreover,
these embeddings are continuous. On the other hand, $M^{m, p}(\Omega)$ is dense in $W^{1, p}(\Omega)$ for any 
$m \in \{ 1, \ldots, n \}$, and $M^{1, p}(\Omega) = W^{1,p}(\Omega)$.

Let us describe some properties of the spaces $M^{m, p}(\Omega)$. Arguing in a similar way to the case of the Sobolev
spaces (see, e.g., theorems 3.2 and 3.5 in \cite{Adams}) one can easily derive the following results.

\begin{theorem}
For any $1 \le m \le n$ the space $M^{m, p}(\Omega)$ is complete in the case $1 \le p \le \infty$, is separable in the
case $1 \le p < \infty$, and is reflexive and uniformly convex in the case $1 < p < \infty$. 
Moreover, $M^{m, 2}(\Omega)$ is a separable Hilbert space with the inner product
$$
  \langle u, v \rangle_m = \sum_{k = 0}^m \sum_{\alpha \in I_k} \langle D^{\alpha} u, D^{\alpha} v \rangle,
$$
where $\langle \varphi, \psi \rangle = \int_{\Omega} \varphi(x) \psi(x) \, dx$ is the inner product in $L_2(\Omega)$.
\end{theorem}

\begin{remark} \label{Rmrk_Multip}
{(i) An analogous result holds true for $M^{m, p}_0(\Omega)$.}

\noindent{(ii) Note that for any $\varphi \in C^{\infty}(\Omega) \cap M^{m, \infty}(\Omega)$ and 
$u \in M^{m, p}(\Omega)$ one has $\varphi u \in M^{m, p}(\Omega)$. Furthermore, for any $\alpha \in I_k$, 
$1 \le k \le m$ one has $$
  D^{\alpha} (\varphi u) = \sum_{\beta + \gamma = \alpha} D^{\beta} \varphi D^{\gamma} u.
$$
}
\end{remark}

It is well-known that the space $W^{m, p}_0(\Omega)$ can be equipped with the norm
\begin{equation} \label{EquivNormOnW12_0}
  \| u \|_{0, m, p} = \Big( \sum_{|\alpha| = m} (\| D^{\alpha} u \|_p)^p \Big)^{\frac 1p}, 
  \quad u \in W^{m, p}_0(\Omega)
\end{equation}
which is equivalent to the standard norm $\| \cdot \|_{m, p}$ (see, e.g., \cite{Adams}, sections 6.25--6.26).
Analogously, the space $M^{m, p}_0 (\Omega)$ can be equipped with a different norm, which is equivalent to the norm 
$\| \cdot; M^{m, p} \|$, and is more suitable for our purposes. Set 
$$
  \| u; M^{m, p}_0 \| = \Big( \sum_{\alpha \in I_m} (\| D^{\alpha} u \|_p)^p \Big)^{\frac 1p},
  \quad u \in M^{m, p}_0(\Omega).
$$
It is clear that $\| \cdot; M^{m, p}_0 \|$ is a seminorm on $M^{m, p}_0(\Omega)$. Arguing in a similar way to the case
of $W^{m, p}_0(\Omega)$ (cf.~\cite{Adams}, section 6.26) one can easily verify that the following theorem holds true.

\begin{theorem} \label{ThEquivNorm}
The seminorm $\| \cdot; M^{m, p}_0 \|$ is a norm on $M^{m, p}_0(\Omega)$ which is equivalent to the norm 
$\| \cdot; M^{m, p} \|$.
\end{theorem}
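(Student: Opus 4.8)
The plan is to reduce the whole statement to a single Poincar\'e-type inequality for mixed derivatives and then transport it from $C_0^{\infty}(\Omega)$ to all of $M^{m, p}_0(\Omega)$ by density. Since the family $I_m$ is a subset of $\bigcup_{k = 0}^m I_k$, the inequality $\| u; M^{m, p}_0 \| \le \| u; M^{m, p} \|$ is immediate. Hence the entire content is the reverse estimate $\| u; M^{m, p} \| \le C \| u; M^{m, p}_0 \|$ for a constant $C = C(\Omega, m, p)$. This one inequality simultaneously gives the equivalence of the two (semi)norms and the positive definiteness of $\| \cdot; M^{m, p}_0 \|$: if $\| u; M^{m, p}_0 \| = 0$, then $\| u; M^{m, p} \| = 0$, so $u = 0$, and therefore the seminorm is in fact a norm.

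First I would prove the key estimate for a test function $\varphi \in C_0^{\infty}(\Omega)$. Fix $\alpha \in I_k$ with $k < m$. Because $k < m \le n$, the multi-index $\alpha$ has at least one vanishing component, say $\alpha_j = 0$, so that $\alpha + e_j \in I_{k + 1}$, where $e_j$ denotes the $j$-th coordinate unit multi-index. Using that $\varphi$ has compact support, the fundamental theorem of calculus in the variable $x_j$ gives $D^{\alpha} \varphi(x) = \int_{a_j}^{x_j} D^{\alpha + e_j} \varphi(x_1, \ldots, \xi_j, \ldots, x_n) \, d \xi_j$. Applying H\"older's inequality in $\xi_j$ (for $1 \le p < \infty$) and then integrating over $\Omega$, the integration in $x_j$ yields one additional factor $(b_j - a_j)$, and after taking $p$-th roots I obtain $\| D^{\alpha} \varphi \|_p \le (b_j - a_j) \| D^{\alpha + e_j} \varphi \|_p$; for $p = \infty$ the same bound follows from $|\int_{a_j}^{x_j} g \, d \xi_j| \le (b_j - a_j) \| g \|_{\infty}$. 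Iterating this step $m - k$ times, each time along a not-yet-differentiated direction (which exists as long as the current order stays below $m$), bounds $\| D^{\alpha} \varphi \|_p$ by $\| D^{\beta} \varphi \|_p$ for some $\beta \in I_m$, up to a constant depending only on the side lengths of $\Omega$. Summing over all $\alpha \in I_k$ and all $0 \le k \le m$ then produces $\| \varphi; M^{m, p} \| \le C \| \varphi; M^{m, p}_0 \|$.

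Next I would pass to the limit. Given $u \in M^{m, p}_0(\Omega)$, by definition there is a sequence $\varphi_{\nu} \in C_0^{\infty}(\Omega)$ with $\varphi_{\nu} \to u$ in $M^{m, p}(\Omega)$; consequently each weak derivative $D^{\alpha} \varphi_{\nu}$ with $\alpha \in I_k$, $0 \le k \le m$, converges in $L_p(\Omega)$ to $D^{\alpha} u$. In particular $\| \varphi_{\nu}; M^{m, p}_0 \| \to \| u; M^{m, p}_0 \|$ and $\| \varphi_{\nu}; M^{m, p} \| \to \| u; M^{m, p} \|$, so the inequality established for $\varphi_{\nu}$ carries over to $u$, completing the argument.

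I expect the genuine work to lie in the first step, the mixed Poincar\'e inequality, and in verifying two points there: that a differentiation-free coordinate is always available while the order remains below $m$ (guaranteed by $k < m \le n$), and that the induction proceeds only through admissible multi-indices, i.e. every intermediate index lies in some $I_l$ with $l \le m$. The trivial direction and the density passage are routine; the only extra care concerns the case $p = \infty$, where H\"older's inequality is replaced by the elementary sup-norm bound above and convergence in $M^{m, \infty}(\Omega)$ is understood as uniform convergence of all the relevant mixed derivatives.
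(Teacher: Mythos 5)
Your proof is correct and takes essentially the same route as the paper, which gives no detailed argument but refers to the classical equivalence proof for $W^{m,p}_0(\Omega)$ (Adams, Section 6.26): a Poincar\'e-type inequality $\| D^{\alpha} \varphi \|_p \le (b_j - a_j) \| D^{\alpha + e_j} \varphi \|_p$ for test functions obtained from the fundamental theorem of calculus and H\"older, iterated, and then transported by density. Your explicit verification that an undifferentiated coordinate ($\alpha_j = 0$) is always available while the order stays below $m \le n$, so the iteration never leaves the admissible family $\bigcup_{l \le m} I_l$, is precisely the one adaptation the mixed-derivative setting requires.
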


\subsection{A Characterization of $M^{n, 2}_0 (\Omega)$}

Our aim now is to give a simple characterization of the space $M^{n, 2}_0(\Omega)$ that is crucial for the computation
of the direction of steepest descent. In order to do that we need to introduce several integral operators that will be
useful in the sequel. 

Let $v \in L_2(\Omega)$. For $1 \le i \le n$ define the operator
$$
  (T_i v)(x) = \int_{a_i}^{x_i} v(x_1, \ldots x_{i-1}, \xi_i, x_{i+1}, \ldots, x_n) \, d \xi_i 
  \quad \text{for a.e.~} x \in \Omega.
$$
By virtue of the Fubini theorem one gets that $T_i$ is correctly defined and is a continuous linear operator mapping
$L_2(\Omega)$ to $L_2(\Omega)$. Let $\alpha \in I_k$, $1 \le k \le n$, and suppose that $\alpha_{i_j} = 1$, 
$1 \le j \le k$, where $1 \le i_1 < \ldots < i_k \le n$, and $\alpha_l = 0$ iff $l \ne i_j$ for any $1 \le j \le k$,
i.~e. $\alpha_{i_j}$ are the only nonzero components of the multi-index $\alpha$. Then define the operator
$T_{\alpha} = T_{\alpha_{i_1}} \circ \ldots \circ T_{\alpha_{i_k}}$ mapping continuously $L_2(\Omega)$ to $L_2(\Omega)$.
With the use of the Fubini theorem one gets that for any permutation $\ell$ of the set $\{1, \ldots, k \}$ the following
holds
$$
T_{\alpha} = T_{\alpha_{ i_{\ell(1)} }} \circ \ldots \circ T_{\alpha_{ i_{\ell(k)} }}.
$$
For the sake of convenience, denote $T = T_{(1, \ldots, 1)}$. Applying the Lebesgue differentiation theorem (cf., for
instance, \cite{Federer}, corollary 2.9.9) and integrating by parts one can verify that for any $v \in L_2(\Omega)$, 
$1 \le k \le n$ and $\alpha \in I_k$ there exists the weak derivative $D^{\alpha} (T v) \in L_2(\Omega)$ and
\begin{equation} \label{DerOfIntegOper}
  D^{\alpha} (T v) = T_{\overline{\alpha}} (v).
\end{equation}
Recall that $\overline{\alpha} \in I_{n - k}$ is a unique multi-index such that $\alpha + \overline{\alpha} \in I_n$.
Thus, as it easy to see, $T$ is a continuous linear operator mapping $L_2(\Omega)$ to $M^{n, 2}(\Omega)$.

For any $\alpha \in I_k$, $1 \le k \le n$ and for all $v \in L_2(\Omega)$ define
$$
  S_{\alpha} v = \int_{ a_{\alpha_{i_1}} }^{ b_{\alpha_{i_1}} } \ldots \int_{ a_{\alpha_{i_k}} }^{ b_{\alpha_{i_k}} }
  v \, d \xi_{ \alpha_{i_k} } \ldots d \xi_{ \alpha_{i_1} },
$$
where $1 \le i_1 < \ldots < i_k \le n$ and $\alpha_{r} = 0$ iff $r \ne i_j$ for all $1 \le j \le k$. For the sake of
convenience denote $S_i = S_{\alpha}$ for any $\alpha \in I_1$, where $i$ is the only non-zero component of $\alpha$.

It is easy to verify that for any $\alpha \in I_k$, $1 \le k \le n$ the linear operator $S_{\alpha}$ continuously maps
$L_2(\Omega)$ to $L_2(\Omega)$. Therefore, in particular, the linear subspace 
$$
  L_0 = \bigcap_{1 \le i \le n} \ker S_i = 
  \bigg\{ v \in L_2(\Omega) \biggm| \int_{a_k}^{b_k} v \, d \xi_k = 0 \: k \in \{ 1, \ldots, n \} \bigg\}.
$$
of the Hilbert space $L_2(\Omega)$ is closed. Consequently, there exists the orthogonal projector $Pr_{L_0}$ of
$L_2(\Omega)$ onto $L_0$. We will need an explicit formula for the projector $Pr_{L_0}$
(cf.~Proposition~\ref{Prp_SDD_Informal}).

\begin{proposition} \label{Prp_AnalyticalProjection}
For any $v \in L_2(\Omega)$ one has
\begin{equation} \label{ProjExplForm}
  Pr_{L_0} v = v + \sum_{k = 1}^n \sum_{\alpha \in I_k} (-1)^{|\alpha|} c_{\alpha} S_{\alpha} v,
\end{equation}
where $c_{\alpha} = \prod_{i = 1}^n (b_i - a_i)^{- \alpha_i}$.
\end{proposition}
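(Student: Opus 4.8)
The plan is to recognise the normalised integral operators $A_i := (b_i - a_i)^{-1} S_i$ as a family of mutually commuting orthogonal projections, and to express $Pr_{L_0}$ as the product of the complementary projections $I - A_i$. First I would check that each $A_i$ is itself an orthogonal projection on $L_2(\Omega)$. Idempotency is immediate: since $A_i v$ is the average of $v$ over the variable $x_i$, it does not depend on $x_i$, so averaging again leaves it unchanged and $A_i^2 = A_i$. Self-adjointness follows from the Fubini theorem: writing $x = (x_i, x')$ and integrating, both $\langle A_i u, v\rangle$ and $\langle u, A_i v\rangle$ reduce to the same symmetric expression $(b_i - a_i)^{-1}\int \big(\int u \, d\xi_i\big)\big(\int v \, dx_i\big)\, dx'$. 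Hence $A_i$ is the orthogonal projection onto its range, and $I - A_i$ is the orthogonal projection onto $\ker A_i = \ker S_i$.

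Next I would verify that the $A_i$ commute, which is again a direct consequence of Fubini: averaging over $x_i$ and then over $x_j$ (with $i \ne j$) yields the same function as averaging in the opposite order, so $A_i A_j = A_j A_i$. Consequently the operators $I - A_i$ also commute and remain self-adjoint. Set $P := \prod_{i=1}^n (I - A_i)$, the order being immaterial by commutativity, and establish the three defining properties of the orthogonal projector onto $L_0$. For the inclusion $\operatorname{Ran} P \subseteq L_0$, fix $j$ and pull the $j$-th factor to the front: $A_j P = A_j (I - A_j)\prod_{i \ne j}(I - A_i) = 0$, since $A_j(I - A_j) = A_j - A_j^2 = 0$; thus $S_j(Pv) = 0$ for every $j$, i.e. $Pv \in L_0$. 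Conversely, if $v \in L_0$ then $A_i v = 0$ for all $i$, so $(I - A_i)v = v$ and hence $Pv = v$, showing that $P$ fixes $L_0$ pointwise. Combining the two facts gives $P^2 = P$ (as $Pv \in L_0$ is then fixed by $P$) and $\operatorname{Ran} P = L_0$. Being a product of commuting self-adjoint operators, $P$ is self-adjoint, so $P$ is precisely the orthogonal projector onto $L_0$, i.e. $P = Pr_{L_0}$.

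It then remains to expand the product and match it with \eqref{ProjExplForm}. Multiplying out $\prod_{i=1}^n (I - A_i)$ and indexing each subset $\{i : A_i \text{ is selected}\}$ by its indicator multi-index $\alpha$, the corresponding term equals $(-1)^{|\alpha|}\prod_{i : \alpha_i = 1} A_i$. By definition $\prod_{i : \alpha_i = 1} A_i = \big(\prod_{i : \alpha_i = 1}(b_i - a_i)^{-1}\big) S_\alpha = c_\alpha S_\alpha$, the identification using the Fubini-based commutativity of the $S_i$ already recorded and the fact that $\alpha_i \in \{0,1\}$. The empty subset contributes the identity, giving the leading term $v$, and grouping the remaining subsets by $k = |\alpha|$ yields exactly $v + \sum_{k=1}^n \sum_{\alpha \in I_k} (-1)^{|\alpha|} c_\alpha S_\alpha v$. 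The only genuinely delicate point is the algebra of commuting projections in the first two paragraphs; the combinatorial expansion is routine bookkeeping once $\prod_{i : \alpha_i=1} A_i$ and $c_\alpha S_\alpha$ are identified. I expect no real obstacle beyond being careful that all these integral operators act on, and commute on, $L_2(\Omega)$, which the Fubini theorem guarantees.
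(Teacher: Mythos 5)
Your proof is correct, but it reaches \eqref{ProjExplForm} by a genuinely different route than the paper. The paper \emph{verifies} the formula rather than deriving it: it names the right-hand side $w$, checks by a direct computation that $S_k w = 0$ for every $k$ (so $w \in L_0$), and then shows $v - w \in L_0^{\perp}$ by noting that each $S_{\alpha} v$ is independent of some variable $x_i$, so that by Fubini $\int_{\Omega} (S_{\alpha} v) h \, dx$ factors through $S_i h = 0$ for every $h \in L_0$; uniqueness of the orthogonal decomposition $v = v_1 + v_2$, $v_1 \in L_0$, $v_2 \in L_0^{\perp}$, then forces $Pr_{L_0} v = w$. You instead \emph{derive} the formula structurally: you recognise the normalised averages $A_i = (b_i - a_i)^{-1} S_i$ as a commuting family of orthogonal projections, show that $P = \prod_{i=1}^n (I - A_i)$ is a self-adjoint idempotent whose range is exactly $L_0$ (via $A_j(I - A_j) = 0$ for one inclusion and $A_i v = 0$ on $L_0$ for the other), and obtain \eqref{ProjExplForm} by the inclusion--exclusion expansion of the product together with the identification $\prod_{i : \alpha_i = 1} A_i = c_{\alpha} S_{\alpha}$. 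Your approach buys conceptual clarity --- the formula need not be guessed in advance, and the paper's unelaborated ``direct computation'' that $w \in L_0$ is absorbed into the one-line identity $A_j P = 0$ --- at the cost of some operator-algebra preliminaries (idempotency, self-adjointness, and commutativity of the $A_i$, and self-adjointness of their product). The paper's verification is shorter and needs only Fubini plus uniqueness of the orthogonal decomposition; indeed your Fubini argument for self-adjointness of $A_i$ is essentially the paper's orthogonality computation in disguise, so the two proofs rest on the same analytic facts about the operators $S_{\alpha}$.
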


\begin{proof}
Fix an arbitrary $v \in L_2(\Omega)$, and denote the function on the right-hand side of \eqref{ProjExplForm} by $w$. 
A direct computation shows that $\int_{a_k}^{b_k} w \, d \xi_k = 0$ for all $k \in \{1, \ldots, n\}$.
Consequently, $w \in L_0$. Let us show that $v - w \in L_0^{\perp}$, where $L_0^{\perp}$ is the orthogonal
complement of $L_0$, then $Pr_{L_0} v = w$, since the decomposition $v = v_1 + v_2$ for $v_1 \in L_0$ and 
$v_2 \in L_0^{\perp}$ is unique. For an arbitrary $h \in L_0$ one has
$$
  \langle v - w, h \rangle = \sum_{k = 1}^n \sum_{\alpha \in I_k} (-1)^{|\alpha|} c_{\alpha} 
  \int_{\Omega} (S_{\alpha} v)(x)  h(x) \, dx.
$$
For any $\alpha \in I_k$, $1 \le k \le n$ there exists $1 \le i \le n$ such that $\alpha_i = 1$. 
Hence $S_{\alpha} v$ does not depend on $x_i$. Denote $\Omega_i = \prod_{k \ne i} (a_i, b_i)$. One has
$$
  \int_{\Omega} (S_{\alpha} v)(x)  h(x) \, dx = \int_{\Omega_i} (S_{\alpha} v)(x) (S_i h)(x) \, 
  d x_1 \ldots d x_{i-1} d x_{i+1} \ldots d x_n   = 0,
$$
by the Fubini theorem and the fact that $S_i h = 0$, since $h \in L_0$. Therefore
$\langle v - w, h \rangle = 0$ for any $h \in L_0$, which means that $v - w \in L_0^{\perp}$. 
\end{proof}

\begin{remark}
{(i) Note that for any function $\varphi \in C^n(\Omega)$ with compact support one has 
$D^{(1, \ldots, 1)} \varphi \in L_0$.
}

\noindent{(ii) It is obvious that if $v \in C^k(\Omega) \cap L_2(\Omega)$ then $Pr_{L_0} v \in C^k (\Omega)$, i.~e. the
projection operator $Pr_{L_0}$ preserves smoothness.
}
\end{remark}

The following theorem gives a convenient characterization of $M^{n, 2}_0(\Omega)$ (cf.~Proposition~\ref{PrpChrctFunc}).

\begin{theorem} \label{ThCharctOfMd2}
A function $u \colon \Omega \to \mathbb{R}$ belongs to $M^{n, 2}_0(\Omega)$ if and only if there exists a function 
$v \in L_2(\Omega)$ such that
\begin{enumerate}
\item{$v \in L_0$, i.~e. for any $1 \le i \le n$ one has $S_i v = \int_{a_i}^{b_i} v \, d \xi_i = 0$,
}

\item{$u(x) = (T v)(x) = \int_{a_n}^{x_n} \ldots \int_{a_1}^{x_1} v(\xi) d \xi_1 \ldots d \xi_n $ for a.e. 
$x \in \Omega$.}
\end{enumerate}
\end{theorem}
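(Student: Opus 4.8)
The plan is to prove both implications by exploiting that $T$ and the mixed derivative $D^{(1,\ldots,1)}$ are mutually inverse in the appropriate sense. The computational fact I would record first is that
$$
  T\big(D^{(1,\ldots,1)}\varphi\big) = \varphi \qquad \forall \varphi \in C_0^{\infty}(\Omega),
$$
obtained by applying the one-dimensional Newton--Leibniz identity $T_i(D_i f) = f$ coordinate by coordinate, the boundary terms vanishing because $\varphi$ and all its derivatives vanish near $\partial\Omega$. Together with \eqref{DerOfIntegOper} (which for $\alpha = (1,\ldots,1)$ gives $D^{(1,\ldots,1)}(Tv) = v$) and the already-established continuity of $T \colon L_2(\Omega) \to M^{n,2}(\Omega)$, this is all the machinery I need.

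For the necessity, I would take $u \in M^{n,2}_0(\Omega)$ and a sequence $\varphi_k \in C_0^{\infty}(\Omega)$ with $\varphi_k \to u$ in $M^{n,2}(\Omega)$, and set $v := D^{(1,\ldots,1)} u \in L_2(\Omega)$, so that $D^{(1,\ldots,1)}\varphi_k \to v$ in $L_2(\Omega)$. Since each $D^{(1,\ldots,1)}\varphi_k$ lies in the closed subspace $L_0$ (by part (i) of the remark following Proposition~\ref{Prp_AnalyticalProjection}), its limit $v$ lies in $L_0$, which is condition~1. Passing to the limit in $\varphi_k = T\big(D^{(1,\ldots,1)}\varphi_k\big)$, using the continuity of $T$ on $L_2(\Omega)$ and the fact that $M^{n,2}$-convergence implies $L_2$-convergence, yields $u = Tv$, which is condition~2.

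For the sufficiency, the entire matter reduces to showing that $E := \{\,D^{(1,\ldots,1)}\varphi : \varphi \in C_0^{\infty}(\Omega)\,\} \subseteq L_0$ is dense in $L_0$. Granting this, for $v \in L_0$ I would pick $\varphi_k \in C_0^{\infty}(\Omega)$ with $D^{(1,\ldots,1)}\varphi_k \to v$ in $L_2(\Omega)$; then $\varphi_k = T\big(D^{(1,\ldots,1)}\varphi_k\big) \to Tv = u$ in $M^{n,2}(\Omega)$ by continuity of $T$, so $u$ is an $M^{n,2}$-limit of functions in $C_0^{\infty}(\Omega)$ and hence lies in $M^{n,2}_0(\Omega)$. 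To establish the density I would take $h \in L_0$ orthogonal to $E$ and show $h = 0$. The condition $\langle h, D^{(1,\ldots,1)}\varphi\rangle = 0$ for all $\varphi \in C_0^{\infty}(\Omega)$ says exactly that the weak mixed derivative $D^{(1,\ldots,1)} h$ vanishes. I would then invoke the structural fact that such an $h$ decomposes as $h = \sum_{i=1}^n g_i$ with each $g_i \in L_2(\Omega)$ independent of $x_i$. Any such $g_i$ is orthogonal to $\ker S_i \supseteq L_0$, since $\langle g_i, w\rangle = \int_{\Omega_i} g_i\,(S_i w) = 0$ for $w \in L_0$ by the Fubini theorem; hence $h$ itself is orthogonal to $L_0$. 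As $h \in L_0$ as well, this forces $h = 0$.

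The main obstacle is the structural lemma that $D^{(1,\ldots,1)} h = 0$ forces $h$ to be a sum of functions each independent of one coordinate. I expect to prove it by induction on $n$: writing $x = (x_1, x')$, the relation $D_1\big(D_2 \cdots D_n h\big) = 0$ identifies the a priori distribution $D_2 \cdots D_n h$ with one independent of $x_1$, after which the inductive hypothesis applies in the $x'$-variables. The delicate points are the careful bookkeeping of the arising integration constants and verifying that the resulting summands genuinely belong to $L_2(\Omega)$; this is the only step that is not a routine consequence of the continuity of $T$, the closedness of $L_0$, Proposition~\ref{Prp_AnalyticalProjection}, and Theorem~\ref{ThEquivNorm}.
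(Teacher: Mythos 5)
Your proof is correct, and its sufficiency half takes a genuinely different route from the paper's. The necessity half is essentially the paper's own argument (set $v = D^{(1,\ldots,1)}u$, use $\varphi_k = T(D^{(1,\ldots,1)}\varphi_k)$ for $\varphi_k \in C_0^{\infty}(\Omega)$, the continuity of $T$, and the closedness of $L_0$). For sufficiency, the paper argues constructively: it multiplies $u = Tv$ by explicit cutoff factors $\prod_k [1-\zeta(m(x_k-a_k))][1-\zeta(m(b_k-x_k))]$ and proves $u_m \to u$ in $M^{n,2}(\Omega)$ by boundary-layer estimates whose key point is that $v \in L_0$ permits replacing $\int_{a_i}^{x_i} v\, d\xi_i$ by $-\int_{x_i}^{b_i} v\, d\xi_i$, so that H\"older gives $|D^{\beta}u|^2 \le C \prod_l (x_l - a_l) \prod_s |b_s - x_s| \, (T_{\overline{\beta}}|v|^2)$, absorbing the factor $m^{2|\gamma|}$ produced by derivatives of the cutoff; it then mollifies. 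You instead reduce everything to density of $E = \{D^{(1,\ldots,1)}\varphi : \varphi \in C_0^{\infty}(\Omega)\}$ in $L_0$ via orthogonal complements. Your route is architecturally shorter but intrinsically tied to $p = 2$, whereas the paper's cutoff computation is what one would need for an $M^{m,p}_0$ analogue with $p \ne 2$. The one debt you leave open --- the structural lemma producing $h = \sum_i g_i$ with $g_i \in L_2(\Omega)$ independent of $x_i$, whose $L_2$-bookkeeping you rightly flag as the delicate point --- is true, and it can be discharged in a few lines, bypassing the distributional induction altogether: fix $\rho_i \in C_0^{\infty}(a_i, b_i)$ with $\int \rho_i = 1$, and let $P_i$ be the partial averaging $(P_i h)(x) = \int_{a_i}^{b_i} h(x_1,\ldots,\xi_i,\ldots,x_n)\rho_i(\xi_i)\, d\xi_i$, a bounded operator on $L_2(\Omega)$ whose range consists of functions independent of $x_i$. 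For a tensor test function $\varphi_1 \otimes \cdots \otimes \varphi_n$ one has $\prod_i(I - P_i^*)(\varphi_1 \otimes \cdots \otimes \varphi_n) = \bigotimes_i \bigl(\varphi_i - (\int\varphi_i)\rho_i\bigr)$, and each factor has zero mean on $(a_i, b_i)$, hence equals $\Phi_i'$ for some $\Phi_i \in C_0^{\infty}(a_i, b_i)$; therefore $\bigl\langle \prod_i (I-P_i)h, \varphi_1 \otimes \cdots \otimes \varphi_n \bigr\rangle = \langle h, D^{(1,\ldots,1)}(\Phi_1 \otimes \cdots \otimes \Phi_n)\rangle = 0$ directly from $h \perp E$. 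Since tensor products are total in $C_0^{\infty}(\Omega)$, this gives $\prod_i (I - P_i)h = 0$, and expanding by inclusion--exclusion writes $h$ as a finite sum of $L_2$ functions each independent of at least one coordinate --- the same inclusion--exclusion structure that underlies the projector formula of Proposition~\ref{Prp_AnalyticalProjection}. Your Fubini computation then yields $h \perp L_0$, hence $h = 0$, completing the density argument and with it your proof.
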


\begin{proof}
Let us show that for any $u \in M^{n, 2}_0(\Omega)$ one has that $v = D^{(1, \ldots, 1)} u \in L_0$ and $u = T v$.
Indeed, it is clear that for any $\varphi \in C^{\infty}_0(\Omega)$ one has that $D^{(1, \ldots, 1)} \varphi \in L_0$
and $\varphi = T D^{(1, \ldots, 1)} \varphi$. Let $u \in M^{n, 2}_0(\Omega)$ be
arbitrary, and $\{ \varphi_k \} \subset C^{\infty}_0(\Omega)$ be a sequence such that $\varphi_k \to u$ in 
$M^{n, 2}(\Omega)$. Then, denoting $v = D^{(1, \ldots, 1)} u$ and $v_k = D^{(1, \ldots, 1)} \varphi_k$, one gets that
for some $C \ge 0$, depending only on $n$ and $\Omega$, the following inequalities hold true
\begin{multline*}
  \| u - T v; M^{n, 2}(\Omega) \| \le \| u - \varphi_k; M^{n, 2}(\Omega) \| +  
  \| T v - T v_k; M^{n, 2}(\Omega) \| \le \\
  \le \| u - \varphi_k; M^{n, 2}(\Omega) \| + C \| v - v_k \|_2 
  \le (C + 1) \| u - \varphi_k; M^{n, 2}(\Omega) \| \to 0
\end{multline*}
as $k \to \infty$. Consequently, $u = T (D^{(1, \ldots, 1)} u)$. Moreover, as it easy to verify, $S_i v = 0$ for all 
$1 \le i \le n$, since $S_i v_k = 0$ for any $k \in \mathbb{N}$. Consequently, $v = D^{(1, \ldots, 1)} u \in L_0$.

Let us prove the converse statement. Fix an arbitrary $v \in L_0$, and set $u = T v$. We need to prove that there
exists a sequence $\{ \varphi_m \} \subset C^{\infty}_0(\Omega)$ such that $\varphi_m \to u$ in $M^{n, 2}(\Omega)$. We
will prove that there exist functions $u_m \in M^{n, 2}(\Omega)$ such that $u_m \to u$ in $M^{n, 2}(\Omega)$ and 
$u_m = 0$ outside some compact set $K_m \subset \Omega$. Then one can mollify $u_m$ to generate a sequence 
$\{ \varphi_m \} \subset C^{\infty}_0 (\Omega)$ such that $\varphi_m \to u$ in $M^{n, 2}(\Omega)$ (see, for instance,
\cite{Evans}, Theorem 5.3.1 and appendix C.4).

Choose an arbitrary function $\zeta \in C^{\infty}_0(\mathbb{R})$ such that $\zeta(x) = 1$ when $x \in [0, 1]$,
$\zeta(x) = 0$, when $x \ge 2$ and $0 \le \zeta(x) \le 1$ for all $x \in \mathbb{R}$. For any $m \in \mathbb{N}$ define
the function
$$
  u_m(x) = (T v)(x) \prod_{k = 1}^n \big[ 1 - \zeta(m(x_k - a_k)) \big]\big[ 1 - \zeta(m(b_k - x_k)) \big]
  \quad \forall x \in \Omega.
$$
It is clear that $u_m \in M^{n, 2}(\Omega)$ and $u_m = 0$ outside $\prod_{k = 1}^n [a_k + 1 / m, b_k - 1/m]$ for
$m \in \mathbb{N}$ large enough. Let us show that $u_m \to u$ in $M^{n, 2}(\Omega)$, then we get the desired result.

Indeed, it is easily to verify that $u_m \to u$ in $L_2(\Omega)$. Fix an arbitrary $\alpha \in I_r$, $1 \le r \le n$.
Observe that
\begin{multline*}
  \prod_{k = 1}^n \big( 1 - \zeta(m(x_k - a_k)) \big)\big( 1 - \zeta(m(b_k - x_k)) \big) = \\
  = 1 + \sum_{k = 1}^n \zeta(m(x_k - a_k)) \omega_k^{(1)}(x) + \sum_{k = 1}^n \zeta(m(b_k - x_k)) \omega_k^{(2)}(x),
\end{multline*}
where the functions $\omega_k^{(i)}$ are infinitely continuously differentiable and bounded on $\Omega$.
Therefore by Remark~\ref{Rmrk_Multip} one has
\begin{multline*}
  D^{\alpha} u_m(x) = 
  (D^{\alpha} u)(x) + \sum_{k = 1}^n (D^{\alpha} u)(x) \zeta(m(x_k - a_k)) \omega_k^{(1)}(x) + \\
  + \sum_{k = 1}^n (D^{\alpha} u)(x) \zeta(m(b_k - x_k)) \omega_k^{(2)}(x) + 
  \sum_{\beta + \gamma = \alpha, \beta \ne \alpha} (D^{\beta} u)(x) \zeta_{\gamma}(x),
\end{multline*}
where $\zeta_{\gamma} \in C^{\infty}(\Omega)$ and 
\begin{equation} \label{derivFactor}
  \zeta_{\gamma}(x) = 
  D^{\gamma} \Big( \prod_{k = 1}^n \big[1 - \zeta(m(x_k - a_k) \big]\big[ 1 - \zeta(m(b_k - x_k) \big] \Big).
\end{equation}
Hence there exists $C \ge 0$ depending only on $n$, $\Omega$ and $\zeta$ such that
\begin{multline*}
  \int_{\Omega} |D^{\alpha} u_m - D^{\alpha} u|^2 dx \le 
  C \sum_{k = 1}^n \int_{a_k}^{a_k + 2 / m} \int_{\Omega_k} |D^{\alpha} u|^2 \, d \xi^{k} d x_k + \\
  + C \sum_{k = 1}^n \int_{b_k - 2 / m}^{b_k} \int_{\Omega_k} |D^{\alpha} u|^2 \, d \xi^{k} d x_k + \\
  + C \sum_{\beta + \gamma = \alpha, \beta \ne \alpha} 
  \int_{\Omega} |D^{\beta} u|^2 (x) |\zeta_{\gamma}|^2(x) \, dx = A_1(m) + A_2(m) + A_3(m)
\end{multline*}
by virtue of the fact that $\zeta(m(x_k - a_k)) = 0$ for any $x_k > a_k + 2 / m$ and $\zeta(m(b_k - x_k)) = 0$ for any 
$x < b_k - 2 / m$. Here $\Omega_k = \prod_{i \ne k} (a_i, b_i)$ and 
$\xi^k = (\xi_1, \ldots, \xi_{k-1}, x_k, \xi_{k+1}, \ldots, \xi_n)$. It is clear that $A_1(m) \to 0$ and $A_2(m) \to 0$
as $m \to \infty$. Let us show that $A_3(m) \to 0$ as $m \to \infty$, then $\| D^{\alpha} u_m - D^{\alpha} u\|_2 \to 0$
as $m \to \infty$ for any $\alpha \in I_r$, $1 \le r \le n$ and, consequently, $u_m \to u$ in $M^{n, 2}(\Omega)$.

Fix an arbitrary $0 \le k \le r - 1$, $\beta \in I_k$ and $\gamma \in I_{r - k}$ such that $\beta + \gamma = \alpha$.
Without loss of generality we can suppose that $\alpha_1 = \ldots = \alpha_r = 1$, 
$\gamma_1 = \ldots = \gamma_{r - k} = 1$ and $\beta_{r - k + 1} = \ldots = \beta_r = 1$. Our aim is to show that
$$
  \int_{\Omega} |D^{\beta} u|^2 (x) |\zeta_{\gamma}(x)|^2 \, dx \to 0 \text{ as } m \to \infty,
$$
then $A_3(m) \to 0$ as $m \to \infty$. Taking into account \eqref{derivFactor} one gets that
$$
  \zeta_{\gamma}(x) = 
  m^{|\gamma|} \sum_{\eta + \theta = \gamma} \omega_{\eta, \theta}(x) 
  \prod_{l = 1}^n \zeta'( m(x_l - a_l) )^{\eta_l} \prod_{s = 1}^n \zeta'( m(b_s - x_s) )^{\theta_s},
$$
where $\omega_{\eta, \theta} \in C^{\infty}(\Omega)$ and $|\omega_{\eta, \theta}| \le 1$. Therefore it is
sufficient to show that for any multi-indices $\eta$ and $\theta$ such that $\eta + \theta = \gamma$ one has
$$
  m^{2 |\gamma|} \int_{\Omega} |D^{\beta} u|^2 (x) \prod_{l = 1}^n |\zeta'( m(x_l - a_l) )|^{2 \eta_l} 
  \prod_{s = 1}^n |\zeta'( m(b_s - x_s) )|^{2 \theta_s } \, dx \to 0
$$
as $m \to \infty$. Fix an arbitrary $0 \le j \le r - k$, $\eta \in I_j$ and $\theta \in I_{r - k - j}$ such that 
$\eta + \theta = \gamma$. Without loss of generality we can suppose that $\eta_1 = \ldots = \eta_j = 1$ and 
$\theta_{j + 1} = \ldots = \theta_{r - k} = 1$. As it was mentioned above, 
$D^{\beta} u = D^{\beta} Tv = T_{\overline{\beta}}v$ (see~\eqref{DerOfIntegOper}). Taking into account the fact that 
$v \in L_0$ one obtains that $\int_{a_i}^{x_i} v \, d \xi_i = - \int_{x_i}^{b_i} v \, d \xi_i$ for any $1 \le i \le n$.
Hence for a.~e. $x \in \Omega$ one has
$$
  D^{\beta} u = (-1)^{|\theta|} \int_{a_1}^{x_1} \ldots \int_{a_j}^{x_j} 
  \int_{x_{j + 1}}^{b_j} \ldots \int_{x_{r - k}}^{b_{r - k}} \int_{a_{r+1}}^{x_{r+1}} \ldots \int_{a_n}^{x_n} v \,
  d \xi_{n} \ldots d \xi_{r + 1} d \xi_{r-k} \ldots d \xi_1.
$$
Consequently, with the use of the H\"older inequality one gets that there exists $C \ge 0$, depending only on $n$ and
$\Omega$, such that for a.~e. $x \in \Omega$
$$
  |D^{\beta} u|^2 (x) \le C \prod_{l = 1}^j (x_l - a_l) \prod_{s = j + 1}^{r - k} |b_s - x_s|
  (T_{\overline{\beta}} |v|^2)(x).
$$
Therefore, applying the fact that $\zeta'( m(x_k - a_k) ) = 0$ for any $x_k > a_k + 2 / m$ and 
$\zeta'( m(b_k - x_k) ) = 0$ for any $x_k < b_k - 2 / m$ one gets that for some $C_1, C_2 > 0$ that do not depend on
$m$ the following holds
\begin{multline*}
  m^{2|\gamma|} \int_{\Omega} |D^{\beta} u|^2 (x) \prod_{l = 1}^j |\zeta'( m(x_l - a_l) )| 
  \prod_{s = j + 1}^{r - k} |\zeta'( m(b_s - x_s) )| \, dx \le \\
  \le C_1 m^{2|\gamma|} \int_{a_1}^{a_1 + 2 / m} \ldots \int_{a_j}^{a_j + 2 / m}
  \int_{b_{j+1} - 2 / m}^{b_{j+1}} \ldots \int_{b_{r - k} - 2 / m}^{b_{r - k}} 
  \prod_{l = 1}^j (x_l - a_l) \times \\
  \times \prod_{s = j + 1}^{r - k} |b_s - x_s| 
  \Big( (S_{\overline{\gamma}} T_{\overline{\beta}} |v|^2)(x) \Big)
  d x_{r-k} \ldots d x_1 \le \\
  \le C_2 \int_{a_1}^{a_1 + 2 / m} \ldots \int_{a_j}^{a_j + 2 / m} 
  \int_{b_{j + 1} - 2 / m}^{b_{j + 1}} \ldots \int_{b_{r - k} - 2 / m}^{b_{r - k}}
  \int_{a_{r - k + 1}}^{b_{r - k + 1}} \ldots \int_{a_n}^{b_n} |v|^2 \, dx \to 0
\end{multline*}
as $m \to \infty$. Thus, the proof is complete.
\end{proof}

\begin{corollary}
The operator $T$ is an isometric isomorphism between 
$$
  L_0 = \Big\{ v \in L_2(\Omega) \Bigm| \int_{a_i}^{b_i} v d \xi_i = 0, 1 \le i \le n \Big\}
$$
and $M^{n, 2}_0(\Omega)$. Furthermore, the inverse operator of the restriction of $T$ to $L_0$ is the restriction of
the differential operator $D^{(1,\ldots,1)}$ to $M^{n, 2}_0(\Omega)$.
\end{corollary}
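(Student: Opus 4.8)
The plan is to assemble the corollary entirely from the three results already in hand: the characterization in Theorem~\ref{ThCharctOfMd2}, the differentiation formula \eqref{DerOfIntegOper}, and the equivalent-norm statement of Theorem~\ref{ThEquivNorm}. The key preliminary observation is that $I_n = \{(1,\ldots,1)\}$ is a singleton, so the norm $\| u; M^{n,2}_0 \|$ on $M^{n,2}_0(\Omega)$ reduces to $\| D^{(1,\ldots,1)} u \|_2$; by Theorem~\ref{ThEquivNorm} this is a genuine norm, equivalent to $\| \cdot; M^{n,2} \|$, and it is relative to this norm that the isometry assertion will hold. The subspace $L_0$ carries the ambient $L_2(\Omega)$-norm.

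First I would establish bijectivity together with the two compositional identities. Theorem~\ref{ThCharctOfMd2} states that $u \in M^{n,2}_0(\Omega)$ if and only if $u = Tv$ for some $v \in L_0$; in particular $T$ maps $L_0$ into $M^{n,2}_0(\Omega)$ and $T|_{L_0}$ is onto. Next I would apply \eqref{DerOfIntegOper} with $\alpha = (1,\ldots,1) \in I_n$: since then $\overline{\alpha} = (0,\ldots,0) \in I_0$ and $T_{(0,\ldots,0)}$ is the empty composition, i.e. the identity, we obtain $D^{(1,\ldots,1)}(Tv) = v$ for every $v \in L_2(\Omega)$. In particular $D^{(1,\ldots,1)} \circ T|_{L_0} = \mathrm{id}_{L_0}$, which already forces $T|_{L_0}$ to be injective. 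Conversely, given $u \in M^{n,2}_0(\Omega)$, choose $v \in L_0$ with $u = Tv$ and apply $D^{(1,\ldots,1)}$ to get $v = D^{(1,\ldots,1)} u$; hence $D^{(1,\ldots,1)} u \in L_0$ and $T(D^{(1,\ldots,1)} u) = u$. Thus $T|_{L_0}$ and $D^{(1,\ldots,1)}|_{M^{n,2}_0(\Omega)}$ are mutually inverse, which is precisely the second assertion of the corollary.

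It then remains to verify the isometry, and this is immediate from the identity just derived: for every $v \in L_0$,
$$
  \| T v; M^{n, 2}_0 \| = \| D^{(1, \ldots, 1)}(T v) \|_2 = \| v \|_2 .
$$
Since $T|_{L_0}$ is linear and preserves norms, it is an isometric isomorphism of $L_0$ onto $M^{n,2}_0(\Omega)$, as claimed.

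I do not anticipate a genuine obstacle, as every ingredient has been prepared beforehand; the only points demanding care are bookkeeping ones — recognizing that $T_{(0,\ldots,0)}$ is the identity operator so that \eqref{DerOfIntegOper} yields $D^{(1,\ldots,1)} T = \mathrm{id}$, and being explicit that the isometry is asserted relative to the equivalent norm $\| \cdot; M^{n,2}_0 \|$ rather than the full norm $\| \cdot; M^{n,2} \|$. With respect to the latter, $T$ would be only a topological isomorphism (bounded with bounded inverse), not an isometric one.
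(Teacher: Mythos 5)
Your proof is correct and takes essentially the same route the paper intends: the corollary is stated without a separate proof precisely because it is the direct assembly of Theorem~3 (the bijection $u = Tv$, $v = D^{(1,\ldots,1)}u$ between $L_0$ and $M^{n,2}_0(\Omega)$), the differentiation identity $D^{(1,\ldots,1)}(Tv) = v$ from the formula $D^{\alpha}(Tv) = T_{\overline{\alpha}}(v)$ with $\overline{\alpha} = (0,\ldots,0)$, and the fact that, since $I_n$ is a singleton, the equivalent norm of Theorem~2 reduces to $\| u; M^{n,2}_0 \| = \| D^{(1,\ldots,1)} u \|_2$ --- exactly the ingredients you combine. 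Your closing caveat, that the isometry is asserted relative to $\| \cdot; M^{n,2}_0 \|$ and that $T$ is merely a topological isomorphism with respect to $\| \cdot; M^{n,2} \|$, is a correct reading of the paper's implicit convention.
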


\begin{remark}
{(i) The proof of the converse statement of the theorem above is based on the proof of the theorem on
the characterization of $W^{1, p}_0(\Omega)$ in terms of the trace operator (see~\cite{Evans}, theorem 5.5.2).
}

\noindent{(ii) Denote by $Tr$ the trace operator defined on $W^{1, 2}(\Omega)$. Clearly, if 
$u \in M^{n, 2}_0(\Omega) \subset W^{1,2}_0(\Omega)$, then $Tr(u) = 0$ (see~\cite{Evans}, Theorem~5.5.2 and
\cite{Leoni}, Theorem~15.29). However, there is an open question whether the converse statement is true, i.e. 
whether $u \in M^{n, 2}(\Omega)$ and $Tr(u) = 0$ implies that $u \in M^{n, 2}_0(\Omega)$. This question becomes more
reasonable after one notes that if $u \in C^n(\overline{\Omega})$ and $Tr(u) = 0$, i.e. $u|_{\partial \Omega} = 0$, then
$u \in M^{n, 2}_0(\Omega)$. Indeed, from the equality $Tr(u) = 0$ it follows that
\begin{equation} \label{NewtLebFormWhenTr0}
  u(x) = \int_{a_1}^{x_1} \frac{\partial u}{\partial x_1}(\xi_1, x_2, \ldots, x_n) \, d \xi_1 
  \quad \forall x \in \Omega.
\end{equation}
Observe that $u(x_1, a_2, x_3, \ldots, x_n) \equiv 0$. Therefore
$\partial u / \partial x_1 (x_1, a_2, x_3, \ldots, x_n) = 0$ for all $x \in \Omega$. Hence 
$$
  \frac{\partial u}{\partial x_1}(x) = \int_{a_2}^{x_2}
  \frac{\partial^2 u}{\partial x_2 \partial x_2} (x_1, \xi_2, x_3, \ldots, x_n) \, d \xi_2 
  \quad \forall x \in \Omega.
$$
Taking into account \eqref{NewtLebFormWhenTr0} one gets that
$$
  u(x) = \int_{a_1}^{x_1} \int_{a_2}^{x_2}
  \frac{\partial^2 u}{\partial x_2 \partial x_2} (\xi_1, \xi_2, x_3, \ldots, x_n) \, d \xi_2 d \xi_1
  \quad \forall x \in \Omega.
$$
Applying mathematical induction one can easily obtain that $u = T D^{(1,\ldots,1)} u$ and 
$D^{(1, \ldots, 1)} u \in L_0$, which by the theorem above yields that $u \in M^{n, 2}_0(\Omega)$.
}
\end{remark}

In the following sections, we will use the function space $M^{m,p}(\Omega, \mathbb{R}^d)$ 
(or $M^{m,p}_0(\Omega, \mathbb{R}^d)$) that consists of all functions 
$u = (u_1, \ldots, u_d) \colon \Omega \to \mathbb{R}^d$ such that $u_i \in M^{m, p}(\Omega)$ 
(or $u_i \in M_0^{m, p}(\Omega)$) for all $i \in \{ 1, \ldots, d \}$. It is clear that the spaces 
$M^{m,p}(\Omega, \mathbb{R}^d)$ and $M^{m,p}_0 (\Omega, \mathbb{R}^d)$ possess the same properties as their 
``one dimensional'' counterparts. In particular, one can easily obtain a characterization of the space 
$M^{n,2}_0(\Omega, \mathbb{R}^d)$ similar to the characterization of $M^{n,2}_0(\Omega)$.

\section{The Direction of Steepest Descent}
\label{Section_NewMinMethods}

In this section, we compute the direction of steepest descent of the main functional of the calculus of variations.
This direction can be utilized in order to design new direct numerical methods for solving multidimensional problems of
the calculus of variations. 

Consider the following problem of the calculus of variations
\begin{equation} \label{MainProblem}
  \min \: \mathcal{I}(u) = \int_{\Omega} f(x, u(x), \nabla u(x)) \, dx \quad
  \text{subject to} \quad u \in \overline{u} + M^{n, 2}_0(\Omega, \mathbb{R}^d),
\end{equation}
where $\overline{u} \in W^{1, 2}(\Omega, \mathbb{R}^d)$ is a given function that defines boundary conditions, 
$u = (u_1, \ldots, u_d)$ and 
$$
  \nabla u = \left\{ \frac{\partial u_j}{\partial x_i} \right\}_{1 \le i \le n}^{1 \le j \le d}
  \in \mathbb{R}^{d \times n}.
$$
We suppose that the function $f \colon \Omega \times \mathbb{R}^d \times \mathbb{R}^{d \times n} \to \mathbb{R}$,
$f = f(x, u, z)$ satisfies the Carath\'eodory condition, is differentiable with respect to $u$ and $z$, and the
derivatives $\partial f / \partial u_i$, $\partial f / \partial z_{ij}$ satisfy the Carath\'eodory condition as well.
Let also the following growth conditions be valid: 
\begin{enumerate}
\item{there exist $C > 0$ and $g \in L_1(\Omega)$ such that for a.~e. $x \in \Omega$ and for all 
$u \in \mathbb{R}^n$, $z \in \mathbb{R}^{d \times n}$ one has $|f(x, u, z)| \le C( |u|^2 + |z|^2 ) + g(x)$.
}
\item{there exist $D_1, D_2 > 0$ and $g_1, g_2 \in L_2(\Omega)$ such that for a.~e. $x \in \Omega$ and for all 
$u \in \mathbb{R}^n$, $z \in \mathbb{R}^{d \times n}$ one has
$$
  \left| \frac{\partial f}{\partial u}(x, u, z) \right| \le D_1 ( |u| + |z| ) + g_1(x), \quad
  \left| \frac{\partial f}{\partial z}(x, u, z) \right| \le D_2 ( |u| + |z| ) + g_2(x).
$$
}
\end{enumerate}

\begin{remark}
In the general case, problem \eqref{MainProblem} is not equivalent to the standard problem of the calculus of
variations
\begin{equation} \label{StandProblem}
  \min \: \mathcal{I}(u) = \int_{\Omega} f(x, u(x), \nabla u(x)) \, dx \quad
  \text{subject to} \quad u \in \overline{u} + W^{1, 2}_0(\Omega, \mathbb{R}^d).
\end{equation}
Problems \eqref{MainProblem} and \eqref{StandProblem} are equivalent if and only if there exists an optimal
solution $u^*$ of problem \eqref{StandProblem} such that $u^* \in \overline{u} + M^{n, 2}_0(\Omega, \mathbb{R}^d)$.
In other words, problems \eqref{MainProblem} and \eqref{StandProblem} are equivalent if and only if there exists a
``sufficiently smooth'' optimal solution $u^*$ of problem \eqref{StandProblem}.
\end{remark}

It is easy to see that the functional $\mathcal{I}$ is G\^ateaux differentiable at every point 
$u \in W^{1, 2}(\Omega, \mathbb{R}^d)$ and its G\^ateaux derivative has the form
\begin{equation} \label{FuncGrad}
  \mathcal{I}'[u](h) = \int_{\Omega} 
  \left( \left\langle \frac{\partial f}{\partial u}(x, u(x), \nabla u(x)), h(x) \right\rangle +
  \left\langle \frac{\partial f}{\partial z}(x, u(x), \nabla u(x)), \nabla h(x) \right\rangle \right) dx
\end{equation}
for all $h \in W^{1, 2}(\Omega, \mathbb{R}^d)$. Here $\langle \cdot, \cdot \rangle$ is the inner product in
$\mathbb{R}^s$, and
$$
  \frac{\partial f}{\partial u} = 
  \left(\frac{\partial f}{\partial u_1}, \ldots, \frac{\partial f}{\partial u_d}\right), \quad
  \frac{\partial f}{\partial z} = \left\{ \frac{\partial f}{\partial z_{ji}} \right\}_{1 \le i \le n}^{1 \le j \le d}.
$$
As it was mentioned above, the problem of finding the direction of steepest descent of the functional $\mathcal{I}$ is
very complicated. We utilise the characterization of $M^{n, 2}_0(\Omega, \mathbb{R}^d)$ (Theorem \ref{ThCharctOfMd2}) in
order to solve this problem and to design new direct numerical methods for minimizing this functional. 

Recall that the operator $T$,
$$
  (T v)(x) = \int_{a_n}^{x_n} \ldots \int_{a_1}^{x_1} v(\xi_1, \ldots, \xi_n) d \xi_1 \ldots d \xi_n 
  \quad \text{for a.e. } x \in \Omega,
$$
is an isometric isomorphism between $L_0 \subset L_2(\Omega, \mathbb{R}^d)$ and $M^{n, 2}_0(\Omega, \mathbb{R}^d)$.
Therefore problem \eqref{MainProblem} is equivalent to the problem
\begin{align*}
  {}&\min \quad F(v) = \mathcal{I}(\overline{u} + T v) = 
  \int_{\Omega} f(x, \overline{u}(x) + (T v)(x), \nabla \overline{u}(x) + \nabla (T v)(x)) \, dx \\
  {}&\text{subject to} \quad v \in L_0,
\end{align*}
We suppose that the functional $F$ is defined on $L_0$, and will minimize $F$ over the space $L_0$.

The functional $F$ is G\^ateaux differentiable, and integrating by parts in \eqref{FuncGrad} one gets that the
G\^ateaux derivative of the functional $F$ has the form
$$
  F'[v](h) = \langle Q(u), h \rangle = \int_{\Omega} \langle Q(u)(x), h(x) \rangle \, dx \quad \forall h \in L_0,
$$
where $u = \overline{u} + T v$,
\begin{multline} \label{QofUdef}
  Q(u)(x) = (-1)^n \int_{x_n}^{b_n} \ldots \int_{x_1}^{b_1}
  \frac{\partial f}{\partial u}(\xi, u(\xi), \nabla u(\xi))\, d\xi_1 \ldots d\xi_n + \\
  + (-1)^{n-1} \sum_{i = 1}^n \int_{x_n}^{b_n} \ldots \int_{x_{i + 1}}^{b_{i + 1}} \int_{x_{i - 1}}^{b_{i - 1}} \ldots
  \int_{x_1}^{b_1} \\
  \frac{\partial f}{\partial z_i}(\xi^i, u(\xi^i), \nabla u(\xi^i)) \, 
  d \xi_1 \ldots d \xi_{i-1} d \xi_{i+1} \ldots d \xi_n,
\end{multline}
$\partial f / \partial z_i = (\partial f / \partial z_{1i}, \ldots, \partial f / \partial z_{di})$,
and $\xi^i = (\xi_1, \ldots, \xi_{i - 1}, x_i, \xi_{i + 1}, \ldots, \xi_n)$. Note that $Q(u) \in L_2(\Omega)$ for any 
$u \in M^{n, 2}_0(\Omega, \mathbb{R}^d)$, since the derivatives of $f$ satisfy the growth condition. Hence one has that
$$
  F'[v](h) = \langle G(u), h \rangle = \int_{\Omega} \langle G(u)(x), h(x) \rangle \, dx \quad \forall h \in L_0,
$$
where $u = \overline{u} + Tv$, and $G(u) = Pr_{L_0} Q(u)$ is the projection of $Q(u)$ onto $L_0$ in 
$L_2(\Omega, \mathbb{R}^d)$ (see~Proposition~\ref{Prp_AnalyticalProjection} above). Note that $G(u) \in L_0$ is the
G\^ateaux gradient of the functional $F$ at the point $v$. Consequently, $- G(u) / \| G(u) \|_2$ is the direction of
steepest descent of $F$ at the point $v$, which implies that $- T G(u) / \| G(u) \|_2$ is the direction of steepest
descent of the functional $\mathcal{I}$ at the point $u = \overline{u} + T v$.

Thus, the following result holds true.

\begin{proposition}
Let $v \in L_0$ and $u = \overline{u} + T v$ (or, equivalently, $u = \overline{u} + w$ for some 
$w \in M^{n, 2}_0(\Omega, \mathbb{R}^d)$ and $v = D^{(1, \ldots, 1)} w$) be such that $F'[v] \ne 0$. Let also 
$\mathcal{J}(w) = \mathcal{I}(\overline{u} + w)$ for any $w \in M^{n, 2}_0(\Omega, \mathbb{R}^d)$. Then 
$- T G(u) / \| G(u) \|_2$ is the direction of steepest descent of $\mathcal{J}$ at the point $w = T v$,
and $- G(u) / \| G(u) \|_2$ is the direction of steepest descent of $F$ at the point $v$.
\end{proposition}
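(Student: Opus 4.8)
The plan is to reduce everything to the elementary fact that, for a G\^ateaux differentiable functional $\Phi$ on a Hilbert space $(H, \langle \cdot, \cdot \rangle_H)$ whose derivative at a point $p$ is represented by a nonzero gradient $g \in H$, i.e.\ $\Phi'[p](h) = \langle g, h \rangle_H$ for all $h \in H$, the solution of $\min \Phi'[p](h)$ over the unit ball $\{ h : \|h\|_H \le 1 \}$ is the unit vector $h^* = - g / \|g\|_H$. This is immediate from the Cauchy--Schwarz inequality, since $\Phi'[p](h) = \langle g, h \rangle_H \ge - \|g\|_H \|h\|_H \ge - \|g\|_H$, with equality precisely when $h = -g/\|g\|_H$. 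First I would record that the hypothesis $F'[v] \ne 0$ guarantees $G(u) \ne 0$: because $F'[v](h) = \langle G(u), h \rangle$ for all $h \in L_0$ and $G(u) \in L_0$, vanishing of $G(u)$ would force $F'[v] = 0$. Hence both quotients in the statement are well defined.

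For the second assertion, I would apply the principle above directly to $F$ on the Hilbert space $L_0$ with the $L_2$ inner product. The gradient of $F$ at $v$ is exactly $G(u)$, since $G(u) = Pr_{L_0} Q(u)$ lies in $L_0$ and $F'[v](h) = \langle G(u), h \rangle$ for every $h \in L_0$. Therefore $-G(u)/\|G(u)\|_2$ is the direction of steepest descent of $F$ at $v$.

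For the first assertion, I would transfer this conclusion through the isometric isomorphism $T \colon L_0 \to M^{n,2}_0(\Omega, \mathbb{R}^d)$. Equip $M^{n,2}_0(\Omega, \mathbb{R}^d)$ with the inner product $\langle \eta_1, \eta_2 \rangle_M = \langle D^{(1,\ldots,1)}\eta_1, D^{(1,\ldots,1)}\eta_2 \rangle_{L_2}$, which induces the norm $\| \cdot; M^{n,2}_0 \|$ of Theorem~\ref{ThEquivNorm} and for which $T$ is isometric. Since $\mathcal{J}(w) = \mathcal{I}(\overline{u}+w) = F(T^{-1}w)$ and $T^{-1} = D^{(1,\ldots,1)}$ on $M^{n,2}_0(\Omega, \mathbb{R}^d)$, writing $\eta = T\tilde h$ with $\tilde h = T^{-1}\eta \in L_0$ yields
\[
  \mathcal{J}'[Tv](\eta) = F'[v](T^{-1}\eta) = \langle G(u), T^{-1}\eta \rangle_{L_2} = \langle T G(u), \eta \rangle_M,
\]
the last equality being the definition of $\langle \cdot, \cdot \rangle_M$ together with $T^{-1} T G(u) = G(u)$. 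Thus $T G(u) \in M^{n,2}_0(\Omega, \mathbb{R}^d)$ is the gradient of $\mathcal{J}$ at $w = Tv$, and the elementary principle gives $-T G(u)/\|T G(u)\|_M$ as the direction of steepest descent. Finally, the isometry of $T$ gives $\|T G(u)\|_M = \|G(u)\|_2$, which produces exactly $-T G(u)/\|G(u)\|_2$.

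The computation is essentially bookkeeping, and the only point that requires care --- which I regard as the crux --- is fixing the correct Hilbert structure on $M^{n,2}_0(\Omega, \mathbb{R}^d)$, namely the one transported from $L_0$ by $T$ (equivalently, the inner product associated with the norm $\| \cdot; M^{n,2}_0 \|$), and then checking that an isometric isomorphism carries gradients, and hence steepest-descent directions, to gradients and steepest-descent directions. Once this identification is in place, the chain rule and Cauchy--Schwarz finish the argument with no further difficulty.
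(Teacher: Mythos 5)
Your proof is correct and follows essentially the same route as the paper: the paper establishes the proposition by observing that $G(u) = Pr_{L_0} Q(u)$ is the G\^ateaux gradient of $F$ at $v$ in the Hilbert space $L_0$, so the steepest descent direction is the normalized negative gradient, and then transfers this to $\mathcal{J}$ through the isometric isomorphism $T$ between $L_0$ and $M^{n,2}_0(\Omega, \mathbb{R}^d)$. Your only addition is to make explicit what the paper leaves implicit --- the Cauchy--Schwarz argument for the unit-ball minimization, the nonvanishing of $G(u)$, and the transported inner product $\langle \eta_1, \eta_2 \rangle_M = \langle D^{(1,\ldots,1)}\eta_1, D^{(1,\ldots,1)}\eta_2 \rangle_{L_2}$ on $M^{n,2}_0(\Omega, \mathbb{R}^d)$ with respect to which the first assertion is to be read --- all of which is consistent with the paper's setup.
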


\begin{remark} \label{Remark_SDD_Preserves_Trace}
Note that for any $v \in L_0$ one has 
$u = Tv \in M^{n, 2}_0 (\Omega, \mathbb{R}^d) \subset W^{1, 2}_0(\Omega, \mathbb{R}^d)$. Therefore, as it was mentioned
above, the trace $Tr(u)$ of the function $u = Tv$ is correctly defined and equals zero. Thus, the direction of steepest
descent $- T G(u) / \| G(u) \|_2$ does not change the values of a function on the boundary of $\Omega$, i.e. 
$Tr( u + \alpha T G(u) ) = Tr(u)$ for all $\alpha \in \mathbb{R}$.
\end{remark}

Let us mention several simple properties of the mappings $G(\cdot)$ and $Q(\cdot)$.

\begin{proposition} \label{PrpPropertOfGrad}
Let $v \in L_0$ and $u = \overline{u} + T v$ (or, equivalently, $u = \overline{u} + w$ for some 
$w \in M^{n, 2}_0(\Omega, \mathbb{R}^d)$ and $v = D^{(1, \ldots, 1)} w$). Let also $\mathcal{J}(w) =
\mathcal{I}(\overline{u} + w)$ for any $w \in M^{n, 2}_0(\Omega, \mathbb{R}^d)$. Then the following statements hold
true:
\begin{enumerate}
\item{$\| \mathcal{J}'[w] \|^2 = \| F'[v] \|^2 = \mathcal{I}'[u](TG(u)) = F'[v](G(u))  = \big( \| G(u) \|_2 \big)^2$;
}

\item{$\mathcal{I}'[u] = 0$ iff $F'[v] = 0$ iff $D^{(1, \ldots, 1)} Q(u) = 0$ in the weak sense;
} 

\item{suppose that $u \in C^2(\Omega, \mathbb{R}^d)$ and 
$f \in C^2(\Omega \times \mathbb{R}^d \times \mathbb{R}^{d \times n})$. Then
$F'[v] = 0$ if and only if for all $x \in \Omega$ one has
$$
  \frac{\partial f}{\partial u}(x, u(x), \nabla u(x)) - 
  \sum_{i = 1}^n \frac{\partial}{\partial x_i} \frac{\partial f}{\partial z_i} (x, u(x), \nabla u(x)) = 0,
$$
i.e. $F'[v] = 0$ iff $u = \overline{u} + T v$ satisfies the Euler--Lagrange equation. 
}
\end{enumerate}
\end{proposition}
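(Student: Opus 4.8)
The plan is to treat the three statements in order, leaning throughout on the fact (Theorem~\ref{ThCharctOfMd2} and the corollary following it) that $T$ is an isometric isomorphism of $L_0$ onto $M^{n,2}_0(\Omega,\mathbb{R}^d)$ with inverse $D^{(1,\ldots,1)}$, where $L_0$ carries the $L_2$ inner product and $M^{n,2}_0$ carries the equivalent norm $\|w; M^{n,2}_0\| = \|D^{(1,\ldots,1)} w\|_2$ supplied by Theorem~\ref{ThEquivNorm}.

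For statement~1 I would first record the chain-rule identities $\mathcal{J}'[w](k) = \mathcal{I}'[u](k)$ for $k \in M^{n,2}_0$ and $F'[v](h) = \mathcal{I}'[u](Th)$ for $h \in L_0$, so that $\mathcal{J}'[w] = F'[v] \circ T^{-1}$. Since $T$ and $T^{-1}$ are isometries, the corresponding dual norms coincide, giving $\|\mathcal{J}'[w]\| = \|F'[v]\|$. Because $L_0$ is a Hilbert space and $G(u) \in L_0$ is the Riesz representative of $F'[v]$, i.e. $F'[v](h) = \langle G(u), h \rangle$, the operator norm of $F'[v]$ equals $\|G(u)\|_2$, whence $\|F'[v]\|^2 = (\|G(u)\|_2)^2$. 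Evaluating directly, $F'[v](G(u)) = \langle G(u), G(u) \rangle = (\|G(u)\|_2)^2$; and since $TG(u) \in M^{n,2}_0$ with $T^{-1}(TG(u)) = G(u)$, one obtains $\mathcal{I}'[u](TG(u)) = \mathcal{J}'[w](TG(u)) = F'[v](G(u))$. This closes the chain of equalities.

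For statement~2 the equivalence $\mathcal{I}'[u] = 0 \Leftrightarrow F'[v] = 0$ is again immediate from $\mathcal{J}'[w] = F'[v] \circ T^{-1}$ and the surjectivity of $T$. The real content is the equivalence with $D^{(1,\ldots,1)} Q(u) = 0$ weakly. Here $F'[v] = 0$ means $\langle Q(u), h \rangle = 0$ for all $h \in L_0$, i.e. $Q(u) \perp L_0$, equivalently $G(u) = Pr_{L_0} Q(u) = 0$. The decisive observation is that the set $\{ D^{(1,\ldots,1)} \varphi \mid \varphi \in C_0^{\infty}(\Omega) \}$ is dense in $L_0$: indeed $C_0^{\infty}(\Omega)$ is dense in $M^{n,2}_0$ by definition, and applying the isometry $T^{-1} = D^{(1,\ldots,1)}$ carries this dense set onto a dense subset of $L_0$. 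Consequently $Q(u) \perp L_0$ iff $\langle Q(u), D^{(1,\ldots,1)} \varphi \rangle = 0$ for every $\varphi \in C_0^{\infty}(\Omega)$, which is precisely the assertion that $D^{(1,\ldots,1)} Q(u) = 0$ in the weak sense.

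For statement~3 the $C^2$ hypotheses allow me to replace ``weak'' by ``classical'' and to differentiate $Q(u)$ directly. I would compute $D^{(1,\ldots,1)} Q(u)$ by differentiating the iterated integrals in~\eqref{QofUdef} term by term, using that $\partial/\partial x_j$ applied to an integral of the form $\int_{x_j}^{b_j}(\cdots)\, d\xi_j$ yields minus the integrand evaluated at $\xi_j = x_j$; each such differentiation both removes one integration and contributes a factor $-1$, while the single derivative hitting the fixed slot $x_i$ in the $z_i$-terms produces the total derivative $\frac{\partial}{\partial x_i} \frac{\partial f}{\partial z_i}(x, u(x), \nabla u(x))$. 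Collecting the signs, $D^{(1,\ldots,1)} Q(u)$ equals, up to the nonzero constant $(-1)^n$, the Euler--Lagrange expression $\frac{\partial f}{\partial u} - \sum_{i=1}^n \frac{\partial}{\partial x_i} \frac{\partial f}{\partial z_i}$; hence, by statement~2, $F'[v] = 0$ iff this expression vanishes identically on $\Omega$. I expect this sign-and-constant bookkeeping in the term-by-term differentiation to be the only genuinely delicate point: statements~1 and~2 are essentially formal consequences of the isometry $T$ and the Riesz representation in $L_0$, whereas statement~3 requires care to confirm that every boundary term drops (it does, since the integrals run up to $b_j$ and the evaluations occur at the interior point $x_i$) and that the total derivative in the $z_i$-terms is assembled correctly.
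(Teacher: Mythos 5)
Your proof is correct and follows the same skeleton as the paper's, but it is sharper at two points, so a comparison is worth recording. For statement~1 the paper says only that it ``follows directly from definitions''; your Riesz-representative argument in $L_0$ combined with the isometry $T$ is exactly the intended content, spelled out. For statement~2 the paper remarks that $D^{(1,\ldots,1)} \varphi \in L_0$ for $\varphi \in C^{\infty}_0$ and declares the equivalence; that observation alone gives only the implication $F'[v] = 0 \Rightarrow D^{(1,\ldots,1)} Q(u) = 0$ weakly, and the converse needs precisely what you supply: the density of $\{ D^{(1,\ldots,1)} \varphi \mid \varphi \in C^{\infty}_0 \}$ in $L_0$, obtained by pushing the dense set $C^{\infty}_0 \subset M^{n,2}_0$ through the isometry $T^{-1} = D^{(1,\ldots,1)}$ (Theorem~\ref{ThEquivNorm} plus the corollary to Theorem~\ref{ThCharctOfMd2}). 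This is a genuine gap-filling relative to the paper's one-line justification. For statement~3 the paper integrates by parts in the weak identity $\langle Q(u), D^{(1,\ldots,1)} \varphi \rangle = 0$ and invokes the fundamental lemma of the calculus of variations, whereas you differentiate $Q(u)$ classically term by term and then cite statement~2; these are the same computation performed in the opposite order, and both are legitimate under the $C^2$ hypotheses. One further point in your favor: your sign bookkeeping, $D^{(1,\ldots,1)} Q(u) = (-1)^n \big( \frac{\partial f}{\partial u} - \sum_{i=1}^n \frac{\partial}{\partial x_i} \frac{\partial f}{\partial z_i} \big)$, is the one consistent with the Fubini derivation of $Q$ (check $n = 1$: there $Q(u)(x) = \int_x^b \frac{\partial f}{\partial u}\, d\xi + \frac{\partial f}{\partial z}(x)$, whose derivative is $(-1)\big(\frac{\partial f}{\partial u} - \frac{d}{dx}\frac{\partial f}{\partial z}\big)$). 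Taken literally, the prefactors $(-1)^n$ and $(-1)^{n-1}$ printed in \eqref{QofUdef} would instead yield $\frac{\partial f}{\partial u} + \sum_{i=1}^n \frac{\partial}{\partial x_i} \frac{\partial f}{\partial z_i}$, which is not a constant multiple of the Euler--Lagrange expression; so there is a sign typo in the paper (also visible in \eqref{QofV}) that your computation silently corrects, and which is harmless for the stated equivalence since $(-1)^n \neq 0$.
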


\begin{proof}
The validity of the first statement follows directly from definitions. Let us prove the second statement. It is clear
that $\mathcal{I}'[u](Th) = F'[v](h)$ for any $h \in L_0$. Therefore $\mathcal{I}'[u] = 0$ if and only if
\begin{equation} \label{WeakEulerLagrangeEq}
  F'[v](h) = \langle Q(u), h \rangle = \int_{\Omega} \langle Q(u)(x), h(x) \rangle \, dx = 0 \quad \forall h \in L_0.
\end{equation}
From the fact that for any $\varphi \in C^{\infty}_0(\Omega, \mathbb{R}^d)$ one has 
$D^{(1, \ldots, 1)} \varphi \in L_0$ it follows that \eqref{WeakEulerLagrangeEq} is equivalent to the fact that 
$D^{(1, \ldots, 1)} Q(u) = 0$ in the weak sense.

Suppose in addition that $u \in C^2(\Omega, \mathbb{R}^d)$ and 
$f \in C^2(\Omega \times \mathbb{R}^d \times \mathbb{R}^{d \times n})$. Then integrating in \eqref{WeakEulerLagrangeEq}
by parts one gets that
$$
  \int_{\Omega} \langle g(x), \varphi(x) \rangle \, dx = 0 \quad 
  \forall \varphi \in C_0^{\infty}(\Omega, \mathbb{R}^d),
$$
where 
$$
  g(x) = (D^{(1, \ldots, 1)} Q(v))(x)
  = \frac{\partial f}{\partial u}(x, u(x), \nabla u(x))
  - \sum_{i = 1}^n \frac{\partial}{\partial x_i} \frac{\partial f}{\partial z_i} (x, u(x), \nabla u(x)).
$$
Applying the fundamental lemma of the calculus of variations (see, e.g., \cite{Dacorogna}, Theorem~3.40) one obtains
the desired result.
\end{proof}

Observe that the mapping $u \to G(u) \in L_0$ is well defined defined for any $u \in W^{1, 2}(\Omega, \mathbb{R}^d)$.

\begin{proposition}
Let $u^* \in \overline{u} + W^{1, 2}_0(\Omega, \mathbb{R}^d)$. Then $\mathcal{I}'[u^*](\cdot) = 0$ on
$W^{1, 2}_0(\Omega, \mathbb{R}^d)$ if and only if $G(u^*) = 0$.
\end{proposition}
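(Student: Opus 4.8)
The plan is to reduce the claim to the orthogonality description of the gradient $G(u^*) = Pr_{L_0} Q(u^*)$ already available from Proposition~\ref{Prp_AnalyticalProjection}, by first replacing the large test-function space $W^{1,2}_0(\Omega, \mathbb{R}^d)$ with the smaller space $M^{n,2}_0(\Omega, \mathbb{R}^d)$, on which $Q(u^*)$ acts transparently through the operator $T$.

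First I would record that $\mathcal{I}'[u^*]$ extends to a bounded linear functional on all of $W^{1,2}(\Omega, \mathbb{R}^d)$. By the growth conditions on $\partial f / \partial u$ and $\partial f / \partial z$, the functions $\partial f/\partial u(\cdot, u^*, \nabla u^*)$ and $\partial f/\partial z(\cdot, u^*, \nabla u^*)$ lie in $L_2$, so \eqref{FuncGrad} together with the Cauchy--Schwarz inequality yields $|\mathcal{I}'[u^*](h)| \le C \| h \|_{W^{1,2}}$. Since $W^{1,2}_0(\Omega, \mathbb{R}^d)$ is by definition the closure of $C^{\infty}_0(\Omega, \mathbb{R}^d)$, continuity shows that $\mathcal{I}'[u^*]$ vanishes on $W^{1,2}_0(\Omega, \mathbb{R}^d)$ if and only if it vanishes on $C^{\infty}_0(\Omega, \mathbb{R}^d)$. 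Because of the inclusions $C^{\infty}_0(\Omega, \mathbb{R}^d) \subset M^{n,2}_0(\Omega, \mathbb{R}^d) \subset W^{1,2}_0(\Omega, \mathbb{R}^d)$, this is in turn equivalent to $\mathcal{I}'[u^*]$ vanishing on $M^{n,2}_0(\Omega, \mathbb{R}^d)$.

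Next I would translate the condition on $M^{n,2}_0(\Omega, \mathbb{R}^d)$ into a condition on $L_0$. Using that $T$ is an isometric isomorphism of $L_0$ onto $M^{n,2}_0(\Omega, \mathbb{R}^d)$, every $w \in M^{n,2}_0(\Omega, \mathbb{R}^d)$ has the form $w = Th$ with $h = D^{(1, \ldots, 1)} w \in L_0$, and $Th \in W^{1,2}_0(\Omega, \mathbb{R}^d)$ has zero trace, so integration by parts produces no boundary terms. The identity that led to \eqref{QofUdef} depends on $u^*$ only through the $L_2$-coefficients $\partial f/\partial u(\cdot, u^*, \nabla u^*)$ and $\partial f/\partial z(\cdot, u^*, \nabla u^*)$, hence it remains valid for the present $u^* \in \overline{u} + W^{1,2}_0(\Omega, \mathbb{R}^d)$, giving $\mathcal{I}'[u^*](Th) = \langle Q(u^*), h \rangle$ for every $h \in L_0$. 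Therefore $\mathcal{I}'[u^*]$ vanishes on $M^{n,2}_0(\Omega, \mathbb{R}^d)$ if and only if $\langle Q(u^*), h \rangle = 0$ for all $h \in L_0$, i.e. if and only if $Q(u^*) \in L_0^{\perp}$.

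Finally I would invoke $G(u^*) = Pr_{L_0} Q(u^*)$: since $Q(u^*) - G(u^*) \in L_0^{\perp}$ and $G(u^*) \in L_0$, we have $\langle Q(u^*), h \rangle = \langle G(u^*), h \rangle$ for all $h \in L_0$, and the choice $h = G(u^*)$ converts $Q(u^*) \in L_0^{\perp}$ into $\| G(u^*) \|_2 = 0$; conversely $G(u^*) = 0$ forces $Q(u^*) \in L_0^{\perp}$. Chaining these equivalences gives the statement. I expect the only genuinely delicate point to be the reduction in the second paragraph---justifying, via the growth conditions and the density of $C^{\infty}_0(\Omega, \mathbb{R}^d)$, that testing against $W^{1,2}_0(\Omega, \mathbb{R}^d)$ is the same as testing against the smaller space $M^{n,2}_0(\Omega, \mathbb{R}^d)$; everything afterwards is a direct application of the projection formula of Proposition~\ref{Prp_AnalyticalProjection} and the isomorphism property of $T$.
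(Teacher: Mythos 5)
Your proof is correct and follows essentially the same route as the paper's: both reduce testing over $W^{1,2}_0(\Omega,\mathbb{R}^d)$ to testing over $M^{n,2}_0(\Omega,\mathbb{R}^d)$ (your choice $h = G(u^*)$ is exactly the paper's test direction $TG(u^*)$ in the necessity part, and your density step via $C^\infty_0(\Omega,\mathbb{R}^d)$ together with boundedness of $\mathcal{I}'[u^*]$ is the paper's sufficiency argument). The only difference is presentational: you organize it as a single chain of equivalences and spell out why the integration-by-parts identity $\mathcal{I}'[u^*](Th) = \langle Q(u^*), h\rangle$ persists for general $u^* \in \overline{u} + W^{1,2}_0(\Omega,\mathbb{R}^d)$, a point the paper covers only by the remark that $u \mapsto G(u)$ is well defined on all of $W^{1,2}(\Omega,\mathbb{R}^d)$.
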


\begin{proof}
Necessity. Suppose that $\mathcal{I}'[u^*](\cdot) = 0$ on $W^{1, 2}_0(\Omega, \mathbb{R}^d)$. Note that 
$T G(u^*) \in M^{n, 2}_0(\Omega, \mathbb{R}^d) \subset W^{1, 2}_0(\Omega, \mathbb{R}^d)$, since $G(u^*) \in L_0$.
Hence $\mathcal{I}'[u^*](T G(u^*)) = 0$. Consequently, integrating by parts one gets
\begin{multline*}
  0 = \mathcal{I}'[u^*](- TG(u^*)) = - \int_{\Omega} \langle Q(u^*)(x), G(u^*)(x) \rangle \, dx = \\
  = - \int_{\Omega} \langle G(u^*)(x), G(u^*(x) \rangle \, dx = - (\| G(u^*) \|_2)^2,
\end{multline*}
since $G(u^*) \in L_0$ and $Pr_{L_0} Q(u^*) = G(u^*)$. Thus, $G(u^*) = 0$.

Sufficiency. Let $G(u^*) = 0$. Then it is easy to verify that $\mathcal{I}'[u^*](h) = 0$ for all 
$h \in M^{n, 2}_0(\Omega, \mathbb{R}^d)$. The space $M^{n, 2}_0(\Omega, \mathbb{R}^d)$ is dense in 
$W^{1, 2}_0(\Omega, \mathbb{R}^d)$, and the linear functional $\mathcal{I}'[u^*]$ is bounded on 
$W^{1, 2}(\Omega, \mathbb{R}^d)$. Therefore $\mathcal{I}'[u^*](h) = 0$ for any 
$h \in W^{1, 2}_0(\Omega, \mathbb{R}^d)$.
\end{proof}

Since we now know the gradient $G(\cdot)$ of the functional $F$, we can modify most of the gradient--based methods of
finite--dimensional optimization to the case of this functional and use these methods in order to find critical points
(or points of global minimum in the convex case) of the functionals $F$ and $\mathcal{I}$. Convergence analysis of these
methods can be performed in the standard way (see, e.g.,
\cite{Rosenbloom,Polyak,Daniel,DemRub70,Vainberg2,KantAkilov,Penot} for
convergence analysis of minimization methods in Banach and Hilbert spaces.).

\section{Conclusion}

In this article we developed a new approach to the design of minimization algorithms for the main
problem of the calculus of variations. Let us discuss some possible generalizations of this approach and some directions
of future research.

\subsection{More General Boundary Conditions}

The results developed in this article can be modified to the case when the boundary condition has the form 
$u|_{\Gamma} = \psi$, where $\Gamma \subset \partial \Omega$. In other words, one can extend the theory presented in
this article to the case when the values of a function $u$ are specified only on a part of the boundary of $\Omega$.
However, it should be mentioned that any modification of the methods discussed above to the case of more general
boundary conditions requires a different formalization, then the one based on the use of the space $M^{n, 2}_0(\Omega)$.
Therefore the following discussion has an informal character.

Let, for example, $n = 2$ and the boundary condition has the from
$$
  u|_{\Gamma} = \psi, \quad \Gamma = \partial \Omega \setminus [a_1, b_1] \times \{ b_2 \},
$$
i.e. the values of $u$ are not specified on the upper side on the rectangle $\Omega = (a_1, b_1) \times (a_2, b_2)$.
Then one can show that the gradient of the functional $F$ has the form
$$
  Q(u)(x_1, x_2) - \frac{1}{b_1 - a_1} \int_{a_1}^{b_1} Q(u)(\xi_1, x_2) \, d \xi_1 \quad 
  \forall (x_1, x_2) \in \Omega.
$$
Here we use the same notation as in Section~\ref{Section_NewMinMethods}.

In the case when the boundary condition has the form
$$
  u|_{\Gamma} = \psi, \quad \Gamma = 
  \big( \{a_1 \} \times [a_2, b_2] \big) \cup \big( [a_1, b_1] \times \{ a_2 \} \big),
$$
the gradient of the functional $F$ coincides with the function $Q(u)$.

If the  boundary condition has the from
$$
  u|_{\Gamma} = \psi, \quad \Gamma = 
  \big( \{a_1 \} \times [a_2, b_2] \big) \cup \big( [a_1, b_1] \times \{ b_2 \} \big),
$$
then one should use a different representation of a function $u$:
$$
  u(x_1, x_2) = - \int_{a_1}^{x_1} \int_{x_2}^{b_2} 
  \frac{\partial^2 u}{\partial x_1 \partial x_2}(\xi_1, \xi_2) d \xi_2 d \xi_1.
$$
This representation can be used to compute the direction of steepest descent.

\subsection{Isoperimetric Problems}

One can easily modify the proposed approach to the case of problems of the calculus of variations with linear
isoperimetric constraints. Namely, let $n = 2$, and suppose that there is the additional constraint
\begin{equation} \label{IsoperimConstr}
  \mathcal{J}(u) = \int_{\Omega} \big( g_0(x) u(x) + g_1(x) D_1 u(x) + g_2(x) D_2 u(x) \big) \, dx = c.
\end{equation}
Denote by $Q_{\mathcal{I}}(u)$ the function $Q(u)$ for the functional $\mathcal{I}$ (see \eqref{QofV} and
\eqref{QofUdef}). 

Applying the same argument as in Section~\ref{Section_InformalArg} one can easily demonstrate that the direction of
steepest descent for the problem with additional isoperimetric constraint~\eqref{IsoperimConstr} has the same form
as for the problem without this constrain with the function $Q_{\mathcal{I}}(u)(x)$ replaced by the function
$Q_{\mathcal{I}}(u) + \lambda Q_{\mathcal{J}}(u)$. Here $\lambda$ is a constant that is chosen so that the direction of
steepest descent satisfies constraint~\eqref{IsoperimConstr} with $c = 0$.

\subsection{Functionals Depending on Higher Order Derivatives}

Let us also note that the approach developed in this article can be generalized to the case when the functional
$\mathcal{I}(u)$ depends on derivatives of the function $u$ of order greater than $1$. Indeed, consider, for instance,
the following two-dimensional problem of the calculus of variations:
\begin{align} \label{HigherOrderProblem}
  {}&\min \quad \mathcal{I}(u) = 
  \int_{\Omega} f\big(x, u(x), \nabla u(x), \nabla^2 u(x) \big) \, dx \\
  {}&\text{subject to} \quad u|_{\partial \Omega} = \psi_1, \quad 
  \frac{\partial u}{\partial \nu} \Big|_{\partial \Omega} = \psi_2, \label{BoundaryCondHigherOrder}
\end{align}
where $\Omega = (a_1, b_1) \times (a_2, b_2)$, and $\nu$ is the outward unit normal at the boundary of $\Omega$. Let us
demonstrate how one can easily transform this problem utilizing the same technique as above in order to easily compute
the direction of steepest descent of the functional $\mathcal{I}(u)$ with respect to a certain norm. Here we provide
only an informal description of such transformation. A formalization of this transformation can be done in the same way
as in the case of the functional that depends only on the first order derivatives.

Let sufficiently smooth functions $u$ and $\overline{u}$ satisfy boundary conditions \eqref{BoundaryCondHigherOrder}.
Then the function $w = u - \overline{u}$ satisfies the same boundary conditions with 
$\psi_1(\cdot) = \psi_2(\cdot) = 0$. Hence
$$
  w(x_1, x_2) = \int_{a_1}^{x_1} \frac{\partial w}{\partial x_1}(\xi_1, x_2) \, d \xi_1
$$
due to the fact that $w(a_1, \cdot) = 0$. Then applying the fact that $w'_{x_1}(a_1, \cdot) = 0$ one obtains that
$$
  w(x_1, x_2) = \int_{a_1}^{x_1} \int_{a_1}^{\xi_1} 
  \frac{\partial^2 w}{\partial x_1^2}(\theta_1, x_2) \, d \theta_1 \, d \xi_1.
$$
Note that $w''_{x_1 x_1}(\cdot, a_2) = 0$, since $w(\cdot, a_2) = 0$, which implies that
$$
  w(x_1, x_2) = \int_{a_1}^{x_1} \int_{a_1}^{\xi_1} \int_{a_2}^{x_2}
  \frac{\partial^3 w}{\partial x_2 \partial x_1^2}(\theta_1, \xi_2) \, d \xi_2 \, d \theta_1 \, d \xi_1.
$$
Finally, from the fact that $w'_{x_2}(\cdot, a_2) = 0$ it follows that 
$w'''_{x_2 x_1 x_1}(\cdot, a_2) = 0$, which yields
$$
  w(x) = \big(T v \big)(x), \quad v = \frac{\partial^4 w}{\partial x_2^2 \partial x_1^2},
$$
where
\begin{equation} \label{HigherOrderIntegralOperator}
  \big( T v \big)(x) = \int_{a_1}^{x_1} \int_{a_1}^{\xi_1} \int_{a_2}^{x_2} \int_{a_2}^{\xi_2}
  v(\theta_1, \theta_2) \, d \theta_2 \, d \xi_2 \, d \theta_1 \, d \xi_1.
\end{equation}
Furthermore, it is easy to check that
$$
  w|_{\partial \Omega} = 0, \quad \frac{\partial w}{\partial \nu} \Big|_{\partial \Omega} = 0
$$
if and only if
\begin{equation} \label{HigherOrderConstraints}
  \int_{a_1}^{b_1} v(\xi_1, \cdot) \, d \xi_1 = 0, \quad \int_{a_2}^{b_2} v(\cdot, \xi_2) \, d \xi_2 = 0.
\end{equation}
Thus, the following result holds true (cf.~Proposition~\ref{PrpChrctFunc}).

\begin{proposition}
Let $u \colon [a_1, b_1] \times [a_2, b_2] \to \mathbb{R}$ be a sufficiently smooth function. Then $u$ satisfies
boundary conditions \eqref{BoundaryCondHigherOrder} if and only if there exists a sufficiently smooth function $v$ such
that
\begin{enumerate}
\item{$u = \overline{u} + T v$, where the operator $T$ is defined in \eqref{HigherOrderIntegralOperator};}

\item{$\int_{a_1}^{b_1} v(\xi_1, \cdot) \, d \xi_1 \equiv 0$ and 
$\int_{a_2}^{b_2} v(\cdot, \xi_2) \, d \xi_2 \equiv 0$.
}
\end{enumerate}
Moreover, $v = \partial^4 u / \partial x_1^2 \partial x_2^2$.
\end{proposition}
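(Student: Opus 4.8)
The plan is to reduce to the homogeneous case and then mimic the proof of Proposition~\ref{PrpChrctFunc}, with single Newton--Leibniz integrations replaced by iterated (double) ones. Since $\overline{u}$ is assumed to satisfy \eqref{BoundaryCondHigherOrder} and the representation $u = \overline{u} + Tv$ is the same as $w = Tv$ for $w := u - \overline{u}$, the assertion reduces to the homogeneous statement already isolated informally before the proposition: a sufficiently smooth $w$ satisfies $w|_{\partial\Omega} = 0$ and $\partial w/\partial\nu|_{\partial\Omega} = 0$ if and only if $w = Tv$, with $T$ as in \eqref{HigherOrderIntegralOperator}, for some smooth $v$ obeying \eqref{HigherOrderConstraints}, in which case $v = \partial^4 w/\partial x_1^2\partial x_2^2$. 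It then remains to turn the informal computation into two clean implications and to identify $v$.

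For necessity I would take $w$ with the homogeneous data, set $v = \partial^4 w/\partial x_1^2\partial x_2^2$, and run the four-step iterated integration preceding the statement: integrate in $x_1$ using $w(a_1,\cdot) = 0$, then again using $\partial w/\partial x_1(a_1,\cdot) = 0$, and then twice in $x_2$ using $w(\cdot,a_2) = 0$ (whence $\partial^2 w/\partial x_1^2(\cdot,a_2) = 0$) and $\partial w/\partial x_2(\cdot,a_2) = 0$ (whence $\partial^3 w/\partial x_1^2\partial x_2(\cdot,a_2) = 0$). This produces $w = Tv$. The two identities \eqref{HigherOrderConstraints} then come from the conditions on the two remaining edges: a direct differentiation shows that $\partial(Tv)/\partial x_1$ evaluated at $x_1 = b_1$ is the iterated $x_2$-antiderivative of $\int_{a_1}^{b_1} v(\xi_1,\cdot)\,d\xi_1$, so the normal-derivative condition on $x_1 = b_1$ forces $\int_{a_1}^{b_1} v(\xi_1,\cdot)\,d\xi_1 \equiv 0$; symmetrically the edge $x_2 = b_2$ forces $\int_{a_2}^{b_2} v(\cdot,\xi_2)\,d\xi_2 \equiv 0$. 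The fundamental lemma of the calculus of variations is what lets one strip the antiderivatives.

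For sufficiency, which I expect to be the main obstacle, I would start from a smooth $v$ satisfying \eqref{HigherOrderConstraints}, put $w = Tv$, and verify the conditions on all four edges. The conditions on the near edges $x_1 = a_1$ and $x_2 = a_2$ are automatic, since both $Tv$ and $\partial(Tv)/\partial x_1$ vanish at $x_1 = a_1$, and both $Tv$ and $\partial(Tv)/\partial x_2$ vanish at $x_2 = a_2$, by the very form of $T$. The normal-derivative conditions on the far edges follow by reading the computation of the previous paragraph in reverse, using \eqref{HigherOrderConstraints}. The genuinely delicate step is the pair of function-value conditions $Tv(b_1,\cdot) = 0$ and $Tv(\cdot,b_2) = 0$, which require the full iterated trace of $Tv$ on the far edges rather than the simpler trace of its normal derivative; verifying that these collapse onto \eqref{HigherOrderConstraints} is the crux, and the step I would treat most carefully before certifying the equivalence. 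Once $w = Tv$ is known to satisfy the homogeneous data, differentiating four times recovers $v$ as the claimed fourth-order mixed derivative, closing the equivalence.
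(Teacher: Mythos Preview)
Your overall strategy---reduce to the homogeneous case and iterate Newton--Leibniz---is exactly what the paper does; the paper gives only the informal derivation preceding the proposition and no separate proof. Your necessity argument is correct and matches the paper's.

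However, the step you correctly flag as ``the crux'' actually \emph{fails}, and with it the ``if'' direction of the proposition as stated. The two constraints in \eqref{HigherOrderConstraints} are \emph{not} sufficient to force $Tv(b_1,\cdot)=0$ and $Tv(\cdot,b_2)=0$. Concretely, on $\Omega=(0,1)^2$ take $v(\theta_1,\theta_2)=(\theta_1-\tfrac12)(\theta_2-\tfrac12)$. Both constraints hold, yet
\[
(Tv)(1,x_2)=\Bigl(\int_0^1\!\!\int_0^{\xi_1}(\theta_1-\tfrac12)\,d\theta_1\,d\xi_1\Bigr)\Bigl(\int_0^{x_2}\!\!\int_0^{\xi_2}(\theta_2-\tfrac12)\,d\theta_2\,d\xi_2\Bigr)
=-\tfrac{1}{12}\Bigl(\tfrac{x_2^3}{6}-\tfrac{x_2^2}{4}\Bigr)\not\equiv 0.
\]
The reason is visible already in the necessity direction: the condition $w(b_1,\cdot)=0$ yields, after stripping the $x_2$-antiderivatives, the \emph{additional} relation
\[
\int_{a_1}^{b_1}\!\!\int_{a_1}^{\xi_1} v(\theta_1,\cdot)\,d\theta_1\,d\xi_1
=\int_{a_1}^{b_1}(b_1-\xi_1)\,v(\xi_1,\cdot)\,d\xi_1=0,
\]
which is a first-moment condition independent of $\int_{a_1}^{b_1}v\,d\xi_1=0$; likewise in $x_2$. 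A correct equivalence therefore requires four linear constraints (zeroth and first moments in each variable), not the two recorded in \eqref{HigherOrderConstraints}. The paper's ``it is easy to check'' claim before the proposition overlooks this, and your proposed sufficiency proof cannot be completed without strengthening the hypotheses on $v$ accordingly.
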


Applying the proposition above one obtains that problem \eqref{HigherOrderProblem}, \eqref{BoundaryCondHigherOrder} is
equivalent to the problem of minimizing the functional $F(v) = \mathcal{I}(\overline{u} + T v)$ subject to 
linear equality constraints \eqref{HigherOrderConstraints}. The direction of steepest descent for this problem with
respect to the $L_2$-norm can be easily computed in the same way as in the proof of Proposition~\ref{Prp_SDD_Informal}.

\subsection{Directions of Future Research}

Let us briefly outline some other directions of future research:
\begin{itemize}
\item{One can modify Newton's method \cite{NewtonMethod} to the multidimensional case, and use the methods developed
in this article to perform each iteration of Newton's method. The use of Newton's method might be reasonable in the
case of highly nonlinear problems of the calculus of variations.
}

\item{In the case of problems of the calculus of variations with nonlinear isoperimetric constraints one can modify
sequential quadratic programming methods \cite{SeqQuadProg} in order to solve these problems. On each iteration of this
method one needs to solve a problem with a quadratic functional and linear constraints that can be solved with the use
of the methods discussed above.}

\item{One can use the same augmented Lagrangian method as in \cite{Pedregal} (or some other augmented Lagrangian
methods) in order to apply the approach developed in this article to variational problems with pointwise constraints.
}
\end{itemize}

\section{Acknowledgements}

The author is sincerely grateful to his colleague G.Sh. Tamasyan and the late professor V.F. Demyanov. It should be
noted that this article is a straightforward generalization of their works on the calculus of variations. Also, 
the author wishes to express his thanks to professor M.Z. Nashed for thoughtful and stimulating comments that helped to
improve the quality of the article.

\bibliographystyle{abbrv}  
\bibliography{MinMethodCalcVar_bib}

\end{document}